\setlist[enumerate]{leftmargin=1.8em}
\setlist[itemize]{leftmargin=2.5em}
\definecolor{green}{rgb}{0,0.8,0} 
\newtheorem{theorem}{Theorem}[section]
\newtheorem{corollary}[theorem]{Corollary}
\newtheorem{lemma}[theorem]{Lemma}
\newtheorem{proposition}[theorem]{Proposition}
\theoremstyle{definition}
\theoremstyle{remark}
\numberwithin{equation}{section}
\newcommand{\nnrm}[1]{{\vert\kern-0.25ex\vert\kern-0.25ex\vert #1 
		\vert\kern-0.25ex\vert\kern-0.25ex\vert}}
\newcommand{\lap}{\Delta}
\newcommand{\rd}{\partial}
\newcommand{\nb}{\nabla}
\newcommand{\omg}{\omega}
\newcommand{\ackn}[1]{
	\addtocontents{toc}{\protect\setcounter{tocdepth}{1}}
	\subsection*{Acknowledgements} {#1}
	\addtocontents{toc}{\protect\setcounter{tocdepth}{1}} }
\definecolor{purple}{rgb}{0.65, 0, 1}
\definecolor{orange}{rgb}{1,.5,0}
\begin{document}
		
	\title{Growth estimates for axisymmetric Euler\\ equations without swirl}
	
	\author{Khakim Egamberganov}
	\address{Department of Mathematics, National University of Singapore, Block S17, 10 Lower Kent Ridge Road, Singapore, 119076, Singapore.}
	\email{k.egamberganov@u.nus.edu}
		
	\author{Yao Yao}
	\address{Department of Mathematics, National University of Singapore, Block S17, 10 Lower Kent Ridge Road, Singapore, 119076, Singapore.}
	\email{yaoyao@nus.edu.sg}
	
	\date{}
	
	\maketitle
	
	\renewcommand{\thefootnote}{\fnsymbol{footnote}}
	\renewcommand{\thefootnote}{\arabic{footnote}}

\begin{abstract} We consider the axisymmetric Euler equations in $\mathbb{R}^3$ without swirl, and establish several upper and lower bounds for the growth of solutions. On the one hand, we obtain an upper bound $t^2$ for the radial moment $\int_{\mathbb{R}^3} r\omega^\theta dx$, which is the conjectured optimal rate by Childress (Phys. D 237(14-17):1921-1925, 2008). On the other hand, for all initial data satisfying certain symmetry and sign conditions, we prove that the radial moment grows at least like $t/\log t$ as time goes to infinity, and $\|\omega(\cdot,t)\|_{L^p(\mathbb{R}^3)}$ exhibits at least $t^{1/4}$ growth in the limsup sense for all $1\leq p\leq \infty$. To the best of our knowledge, this is the first result to establish power-law $L^p$-norm growth for smooth, compactly supported initial vorticity in $\mathbb{R}^3$.
For these initial data, we also show that nearly all vorticity must eventually escape to $r\to\infty$ in the time-integral sense.
\end{abstract}

\section{Introduction}
The incompressible Euler equations in $\mathbb{R}^3$ describe the motion of incompressible and inviscid fluids in three dimensions. The equations can be written in vorticity form as
\begin{equation} \label{eq:Euler}
	\left\{
	\begin{aligned}
	&\rd_t\omg + u\cdot\nb\omg = \omega \cdot \nabla u,  \\
	&u = \nb \times (-\lap)^{-1}\omg, 
	\end{aligned}
	\right.
\end{equation}
where $u(\cdot,t)$  and $\omega(\cdot,t): = \nabla \times u(\cdot,t)$ denote the velocity and vorticity of the fluid, respectively. At each time, the velocity $u$ can be recovered from the vorticity $\omega$ through the second equation, which is known as the Biot--Savart law. The term $\omega \cdot \nabla u$ on the right hand side of the first equation is referred to as the vortex stretching term, which can cause vorticity growth and might potentially lead to singularities.

In this paper, we restrict ourselves to axisymmetric Euler flows in $\mathbb{R}^3$ without swirl. These are the solutions of \eqref{eq:Euler} where the velocity field $u$ takes the form
\[
u(r,z,t) = u_r (r,z,t) e_r+ u_z(r,z,t) e_z
\]
when expressed in cylindrical coordinates $(r,\theta,z)$, where $(e_r,e_{\theta},e_z)$ are the cylindrical unit vectors in $\mathbb{R}^3$. 
 Then the vorticity becomes 
\begin{equation}\label{omega}
\omega (r,z,t) = \omega^{\theta} (r,z,t) e_{\theta} \quad \text{with} \quad \omega^{\theta} := -\partial_z u_{r} + \partial_r u_z,
\end{equation}
and the first equation in \eqref{eq:Euler} simplifies to 
\begin{equation} \label{eq:Euler2}
\left(\partial_t  + u^r\partial_r + u^z\partial_z\right) \frac{\omega^{\theta}}{r} = 0.
\end{equation}
This tells us that the quantity $\omega^{\theta}/r$ remains conserved along particle trajectories, and consequently, $\|\omega^{\theta}/r\|_{L^p(\mathbb{R}^3)}$ is conserved in time for all $1\leq p\leq \infty$.

		\medskip
Before stating our results, we begin with a brief review of the existing literature on well-posedness and growth results for the axisymmetric Euler equations in $\mathbb{R}^3$ without swirl.

\medskip
\noindent\textbf{Global well-posedness and singularity for 3D axisymmetric Euler  without swirl.}
For the 3D axisymmetric Euler equation without swirl, global well-posedness of the solution was first shown by Ukhovsksii--Yudovich \cite{UY} in 1968, for $u_0\in H^3(\mathbb{R}^3)$ and $\omega_0^\theta/r \in (L^2\cap L^\infty)(\mathbb{R}^3)$. Subsequently, the regularity requirements on $u_0$ were relaxed by Majda \cite{majda1986vorticity}, Saint Raymond \cite{saint1994remarks} and Shirota--Yanagisawa \cite{SY} to $u_0 \in H^s$ with $s>5/2$. 

\smallskip
 The first well-posedness result using only $L^p$ bounds on $\omega_0^\theta$ (without any additional regularity of $u_0$) is obtained by Danchin \cite{Danchin}, under the assumption $\omega_0^\theta \in (L^{3,1}\cap L^\infty)(\mathbb{R}^3)$ and $\omega_0^\theta/r \in L^{3,1}(\mathbb{R}^3)$, where $L^{3,1}$ denotes the Lorentz space. Here the condition $\omega_0^\theta/r \in L^{3,1}(\mathbb{R}^3)$ is necessary -- Kim--Jeong \cite{JK} showed that there exists some $q_0<\infty$ such that the equation is ill-posed if the condition is replaced by $\omega_0^\theta/r \in L^{3,q}(\mathbb{R}^3)$ for any $q\in (q_0,\infty]$. The well-posedness theory has been further extended by Abidi--Hmidi--Keraani \cite{AHK} when $u_0$ belongs to the critical Besov spaces $B_{p,1}^{1+3/p}$ and $\omega_0^\theta/r\in L^{3,1}(\mathbb{R}^3)$. 

\smallskip
On the other hand, if $\omega_0^\theta/r$ is more singular near the axis, finite-time singularity can happen: Elgindi \cite{Elgindi} and Elgindi--Ghoul--Masmoudi \cite{EGM} proved finite-time
singularity formation of \eqref{eq:Euler}--\eqref{omega} for some $\omega_0^\theta \in C^{\alpha}(\mathbb{R}^3)$ with sufficiently small $\alpha>0$, and a new proof was recently found by C\'{o}rdoba--Mart\'{i}nez-Zoroa--Zheng \cite{CMZ} for some initial data $u_0 \in C^\infty(\mathbb{R}^3\setminus\{0\})\cap C^{1, \alpha}\cap L^2$ for small $\alpha>0$.

\medskip
\noindent\textbf{Vorticity growth from anti-parallel vortex rings.}
For initial data belonging to the class that leads to global well-posedness, it is natural to ask whether the vorticity can grow as $t\to\infty$. Since $\omega^\theta/r$ is conserved along each particle trajectory by \eqref{eq:Euler2}, the vorticity would only grow to infinity if the radius of some fluid particles grow to infinity. When the initial vorticity consists of two anti-parallel vortex rings of the form
\begin{equation}\label{temp_sym}
\omega_0^\theta(r,z) = f(r,z) - f(r,-z)
\end{equation}
where $f\geq 0$ is localized near a point $(r_0,z_0)$ with $r_0>0, z_0>0$, experimental evidence shows that the two vortex rings move toward each other and expand over time as their vorticity amplitude increases; see \cite[Section 5.2]{shariff1992vortex} and the references therein. 

\smallskip
This scenario has been formally analyzed in a series of work by Childress, Gilbert and Valiant \cite{Childress1, CGV, CG}. Childress \cite{Childress1} conjectured that this scenario leads to the fastest possible growth of vorticity amplitude: for $t\gg 1$, formal analysis suggests that there is a region in the $rz$-plane of length scale $t^{-1}$ and located at $r\sim t^{4/3}$, where the vorticity amplitude is of order $t^{4/3}$,  and the kinetic energy in this region remains of order 1. See Figure~\ref{fig_dipole} for an illustration of the conjectured scenario, with the region contained within the dashed orange rectangle. Numerical simulations by Childress--Gilbert--Valiant \cite{CGV} suggest that the $t^{4/3}$ vorticity growth can be achieved, although rigorously justifying these asymptotics remains a challenging open question.

\begin{figure}[htbp]
\includegraphics[scale=1.1]{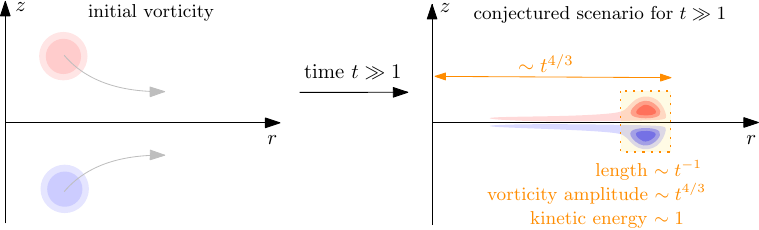}
\caption{\label{fig_dipole} The conjectured vorticity growth scenario in Childress \cite{Childress1}, where $\|\omega^\theta\|_{L^\infty}$ is conjectured to have $t^{4/3}$ growth. See Childress--Gilbert--Valiant \cite[Figure~7]{CGV} for numerical simulations of this scenario.}
\end{figure}
\medskip
\noindent\textbf{Upper bounds of vorticity growth.}
For compactly supported initial vorticity satisfying $\omega_0^\theta/r \in L^\infty(\mathbb{R}^3)$, Childress \cite{Childress2} established the upper bound $\|\omega^\theta\|_{L^\infty} \lesssim t^2$ by solving a variational optimization problem that maximizes the radial velocity subject to constraints on the vorticity. Alternatively, this  $t^2$ upper bound also be derived using the interpolation inequality for maximum velocity in Feng--\v{S}ver\'{a}k \cite[Eq. (1.23)]{FeSv}; see \cite[Section 1.4.2]{LJ} for an explanation. 

\smallskip
For compactly supported vorticity with $\omega_0^\theta/r \in L^\infty(\mathbb{R}^3)$, Lim--Jeong \cite{LJ} has obtained the upper bound $\|\omega^\theta\|_{L^\infty} \lesssim t^{4/3}$, which matches the conjectured growth rate by Childress~\cite{Childress1}. They obtained this bound by deriving a novel interpolation inequality that bounds the radial velocity $u_r$ by the kinetic energy and conserved quantities involving the vorticity. Very recently, Shao--Wei--Zhang~\cite{SWZ} improved upon this by proving the same $t^{4/3}$ upper bound without the compact support assumption for $\omega_0^\theta$, for all initial data $u_0 \in L^2 \cap C^{1,\alpha}(\mathbb{R}^3)$ and $\omega_0^\theta,\, \omega_0^\theta/r \in L^\infty(\mathbb{R}^3)$.

\medskip
\noindent\textbf{Lower bounds of vorticity growth.}
When the initial vorticity satisfies the odd-in-$z$ assumption~\eqref{temp_sym} with $f \geq 0$ in $\{z > 0\}$ (which is the scenario in Figure~\ref{fig_dipole}, except that $f$ does not need to be localized), Choi--Jeong~\cite{Choi-Jeong} showed that the \emph{radial moment} $P(t) = \int_{\mathbb{R}^3} r |\omega^\theta|\, dx$ increases in time, while the \emph{vertical moment} $Z(t) = \int_{\mathbb{R}^3} z \omega^\theta \,dx$ decreases in time. By controlling $P'(t)$ from below using $Z(t)$ and the kinetic energy $E$, they also established a quantitative lower bound  $P(t) \gtrsim t^{\frac{2}{15}-}$. For patch-type initial vorticity, this directly leads to the estimate $\|\omega^\theta\|_{L^\infty} \gtrsim t^{\frac{1}{15}-}$. For certain initial vorticity in $C^{1,\gamma}$  (with $\gamma<\frac{1}{14}$) where $r/\omega_0^\theta$ satisfies some integrability condition in its support, the growth of $P(t)$ also implies a (slower) growth for $\|\omega^\theta\|_{L^\infty}$. 

More recently, Gustafson--Miller--Tsai \cite{GMT} has improved the lower bound into $P(t) \gtrsim t^{\frac{3}{4}-}$, based on new estimates for the time derivative of $P$, leading to the lower bound $\|\omega^\theta\|_{L^\infty} \gtrsim t^{\frac{3}{8}-}$ for patch-type initial data.

\smallskip
To the best of our knowledge, for \emph{smooth}, compactly supported initial vorticity $\omega_0$,  there was no example in the literature showing $\|\omega\|_{L^\infty}$ (or $\|\omega\|_{L^p}$) grows to infinity for the 3D incompressible Euler equation in the whole space $\mathbb{R}^3$, even without restricting to the axisymmetric setting. (For periodic domains, it is known that the ``$2+\frac{1}{2}$-dimensional flows''  can lead to exponential growth of $\|\omega\|_{L^\infty}$ in $\mathbb{T}^3$ \cite[Proposition 10.2]{elgindi2020ill}, and linear growth in $\mathbb{R}^2\times \mathbb{T}$ \cite[Section 2.3.1]{majda2002vorticity}.) When the domain has a boundary, or when $\omega_0$ is less regular, significant progress has been made  in the last decade on finite-time singularity formation \cite{EJ, Elgindi, EGM, CH1, CH2,  WLGB, CH3, CMZ}. For other infinite-in-time growth results on 3D Euler equation, see \cite{yudovich2000loss, Do, Choi-Jeong, choi2023filamentation, KPY}; these results are either in the presence of boundary, or the growth is on derivatives of $\omega$ instead of $\omega$ itself. We refer the readers to the reviews by Kiselev \cite{kiselev2018small} and Drivas--Elgindi \cite{drivas2023singularity} for an overview on singularity formation and growth in Euler equations.

\subsection{Our results}
  Let us start by setting the initial data we will be working on. Since the evolution of the vorticity vector $\omega$ is completely determined by the scalar component $\omega^{\theta}$, from now on, we will denote $\omega^\theta$ as $\omega$ (and the initial data $\omega^\theta_0$ as $\omega_0$), and refer to it as the vorticity.
Throughout this paper, we assume that
\begin{equation}
    \label{assu_w0}
   u_0 \in L^2(\mathbb{R}^3) \quad\text{ and } \omega_0, \frac{\omega_0}{r} \in L^1(\mathbb{R}^3)\cap L^\infty(\mathbb{R}^3).
\end{equation}
Note that there is a global-in-time solution in this class by  \cite{Danchin}, and if in addition $\omega_0$ is  assumed to be smooth, then $\omega(\cdot,t)$ is also smooth for all times.

In the axisymmetric setting, since the functions $u_r$, $u_z$, and $\omega$ can be written as functions of $(r,z)$ with $r\ge0$, it is sufficient to consider their evolution in the half plane
\[
\Pi:= \{ (r,z) \, : \, r\ge 0, \, \, z\in \mathbb{R} \}.
\]
For $k \in \mathbb{N}$, we define the \emph{$k$-th radial moment} as
\begin{equation} \label{eq:radial}
P_k(t) := \iint_{\Pi} r^k |\omega| \, drdz \quad  \quad \text{($k$-th radial moment)},
\end{equation}
and  the \textit{vertical moment} as
\begin{equation} \label{eq:vertical}
Z(t) := \iint_{\Pi} z\omega \, drdz  \quad  \quad \text{(vertical moment)}.
\end{equation}
These quantities play a key role in our study.
 
\medskip

Now, we state our main results, which can be divided into upper bounds and lower bounds. 

\medskip
\noindent\textbf{Upper bounds for the solutions}. We first prove an upper bound for all $k$-th radial moment for $k\geq 2$, and $\|\omega(\cdot,t)\|_{L^\infty}$. For this result, we do not require any additional symmetry or sign assumption on $\omega_0$.

\begin{theorem}[Upper bounds for radial moments, $\|u_r\|_{L^\infty}$ and $\|\omega\|_{L^\infty}$] \label{thm:main1}
Let the initial data $u_0$ and $\omega_0$ satisfy \eqref{assu_w0}. In addition, assume that $P_2(0)<\infty$. Then for each $k\geq 2$, if $P_k(0)<\infty$, we have
\begin{equation}\label{upper_bd_pk}
P_k(t) \le C(k,\omega_0, u_0) (1+t)^{\frac{4}{3}k-\frac{2}{3}} \quad\text{ for all }t\geq 0.
\end{equation}
In addition, we also have the following upper bounds for the velocity and vorticity growth:
\begin{equation}\label{upper_bd_u}
\|u_r(\cdot,t)\|_{L^\infty(\mathbb{R}^3)} \leq C(\omega_0,u_0) (1+t)^{1/3}  \quad\text{ for all }t\geq 0.
\end{equation}
\begin{equation}\label{upper_bd_omega}
\|\omega(\cdot,t)\|_{L^\infty(\mathbb{R}^3)} \leq C(\omega_0,u_0) (1+t)^{4/3}  \quad\text{ for all }t\geq 0.
\end{equation}
\end{theorem}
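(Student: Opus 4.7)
The plan is to establish the velocity bound \eqref{upper_bd_u} first, since \eqref{upper_bd_omega} and \eqref{upper_bd_pk} will then follow. For \eqref{upper_bd_u}, I would invoke a Lim--Jeong-type interpolation inequality of the form
\[ \|u_r(\cdot,t)\|_{L^\infty(\mathbb{R}^3)}^{4} \;\lesssim\; E\cdot \|\omega_0/r\|_{L^\infty}\cdot \|\omega(\cdot,t)\|_{L^\infty}, \]
in which only $\|\omega(\cdot,t)\|_{L^\infty}$ is time-dependent; such an estimate is in the spirit of \cite{LJ, SWZ}, obtained by representing $u_r$ via the axisymmetric Biot--Savart kernel and balancing near- and far-field contributions at the optimal cutoff scale. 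Letting $R(t)$ denote the effective radial extent of $\omega(\cdot,t)$, the pointwise conservation of $\omega/r$ along trajectories yields $\|\omega(\cdot,t)\|_{L^\infty}\leq\|\omega_0/r\|_{L^\infty}\, R(t)$, and outermost particles satisfy $R'(t)\leq\|u_r\|_{L^\infty}$. Substituting produces the ODE $R'(t)\leq C R(t)^{1/4}$, which integrates to $R(t)\leq C(1+t)^{4/3}$ and delivers both \eqref{upper_bd_u} and \eqref{upper_bd_omega}.

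For the moment bound \eqref{upper_bd_pk} I would first derive the evolution identity
\[ P_k'(t) \;=\; \frac{k}{2\pi}\int_{\mathbb{R}^3} r^{k-2}|\omega|\,u_r\,dx, \qquad k\geq 1, \]
using that $|\omega|/r$ is transported by the 3D volume-preserving flow, together with the divergence-free identity $\partial_r u_r + u_r/r + \partial_z u_z = 0$ in cylindrical coordinates. Plugging in the $L^\infty$-bound on $u_r$ gives the differential inequality $P_k'(t)\leq k\|u_r\|_{L^\infty}P_{k-1}(t)\leq Ck(1+t)^{1/3}P_{k-1}(t)$, and an induction on $k\geq 2$ closes the exponent arithmetic: if $P_{k-1}(t)\leq C(1+t)^{4(k-1)/3-2/3}$, integrating in time yields $P_k(t)\leq C(1+t)^{4k/3-2/3}$. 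The whole claim thus reduces to the base case $P_2(t)\leq C(1+t)^2$.

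The main obstacle is exactly this base case: the inductive machinery above, fed with the crude estimate $P_1(t)\lesssim(1+t)^{4/3}$ coming from the same differential inequality applied once, would only yield $P_2(t)\lesssim(1+t)^{8/3}$, leaving a factor $(1+t)^{2/3}$ unaccounted for. To close this gap, the plan is to work directly with the 3D volume-preserving flow map $\Phi_t$ and the change-of-variables identity
\[ P_2(t) \;=\; \frac{1}{2\pi}\int_{\mathbb{R}^3} r(\Phi_t(x_0))^2\,|q_0(x_0)|\,dx_0, \qquad q_0:=\omega_0/r, \]
expanding $r(\Phi_t(x_0))^2 = r_0^2 + 2r_0\xi + \xi^2$ with the signed radial displacement $\xi(x_0,t):=r(\Phi_t(x_0))-r_0 = \int_0^t u_r(\Phi_s(x_0),s)\,ds$. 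The leading term integrates to the time-independent quantity $2\pi P_1(0)$, while the remaining two terms are estimated through the $L^2_{x_0}$-norm of $\xi$. Minkowski's inequality combined with the volume-preserving change of variables gives
\[ \|\xi(\cdot,t)\|_{L^2(dx_0)} \;\leq\; \int_0^t \|u_r(\cdot,s)\|_{L^2(\mathbb{R}^3)}\,ds \;\leq\; \|u_0\|_{L^2}\cdot t, \]
where the last inequality uses energy conservation. Since $\|q_0\|_{L^\infty}<\infty$ by \eqref{assu_w0}, the square term $\int\xi^2|q_0|dx_0$ is at most $Ct^2$, and Cauchy--Schwarz with the weighted measure $|q_0|dx_0$ bounds the cross term $\int r_0\xi|q_0|dx_0$ by $C t$, yielding $P_2(t)\leq C(1+t)^2$. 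The key point is that the displacement $\xi$ is controlled in $L^2_{x_0}$ (giving linear growth in $t$) rather than in $L^\infty_{x_0}$ (which would only give $(1+t)^{4/3}$): averaging over particles via the conserved kinetic energy is what saves the extra factor $(1+t)^{2/3}$ and yields the sharp rate.
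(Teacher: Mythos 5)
Your Lagrangian derivation of $P_2(t)\lesssim (1+t)^2$ is correct and a genuinely different route from the paper. The paper instead establishes the differential inequality $P_2'(t)\le 2(A_0E_0)^{1/2}P_2(t)^{1/2}$ (Lemma~\ref{lem_pk_der} plus Cauchy--Schwarz in the $rz$-plane) and integrates; your flow-map argument with Minkowski's inequality and energy conservation gives the very same bound $P_2(t)\le\big(P_2(0)^{1/2}+(A_0E_0)^{1/2}t\big)^2$, with the paper's version being Eulerian and yours Lagrangian. (A small slip: the leading term equals $P_2(0)$, not $2\pi P_1(0)$.) Your induction $P_k'\le k\|u_r\|_{L^\infty}P_{k-1}$ for $k>2$ is also a valid, slightly coarser alternative to the paper's single H\"older estimate $P_k'\lesssim\|u_r\|_{L^\infty}^{(k-2)/k}E_0^{1/k}P_k^{(k-1)/k}$; your exponent arithmetic checks out and the intermediate $P_j(0)<\infty$ for $2\le j\le k$ follow by interpolation between $P_2(0)$ and $P_k(0)$. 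One technical caveat both need: the evolution of $P_k$ should be stated as an inequality $P_k'(t)\le k\iint r^{k-1}|u_r\omega|\,drdz$, not an identity, since $|\omega|$ is not smooth; the paper handles this by mollifying the absolute value in Lemma~\ref{lem_pk_der}, and your proposal should do the same.

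The genuine gap is in how you obtain the $\|u_r\|_{L^\infty}$ and $\|\omega\|_{L^\infty}$ bounds, which you place first and attempt to make self-contained. Your argument relies on (i) an interpolation $\|u_r\|_{L^\infty}^4\lesssim E\cdot\|\omega_0/r\|_{L^\infty}\cdot\|\omega\|_{L^\infty}$ and (ii) the ``effective radial extent'' $R(t)$, with $\|\omega\|_{L^\infty}\le A_0 R(t)$ and $R'(t)\le\|u_r\|_{L^\infty}$. Item (ii) implicitly assumes $\omega_0$ has compact support, otherwise no finite $R(t)$ exists; but the theorem explicitly replaces compact support by the weaker hypothesis $P_2(0)<\infty$ (indeed the paper emphasizes this is the improvement over Lim--Jeong). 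Item (i) is not the inequality invoked by the paper: Lim--Jeong's Prop.~A.1 gives $\|u_r\|_{L^\infty}\lesssim\|\sqrt{r}u\|_{L^2}^{1/3}A_0^{1/2}P_2^{1/6}$, which involves $P_2$, not $\|\omega\|_{L^\infty}$; and the introduction notes that the $\|\omega\|_{L^\infty}$-based inequality of Feng--\v{S}ver\'ak only yields the weaker rate $t^2$, so your claimed inequality (if it is that one) would not feed the bootstrap the way you sketch, and if it is something else it is unproved. The natural repair uses the ingredients you already have: first establish $P_2(t)\lesssim(1+t)^2$ by your Lagrangian argument, then plug that into the Lim--Jeong interpolation to get $\|u_r\|_{L^\infty}\lesssim(1+t)^{1/3}$, then obtain $\|\omega\|_{L^\infty}\lesssim(1+t)^{4/3}$ from the flow map by the paper's case split on whether the initial radius $r$ is smaller or larger than $C(1+t)^{4/3}$ (which works with no support assumption), and finally run your $P_k$ induction.
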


We point out that these upper bounds matches the conjectured optimal rate by Childress \cite{Childress1}, in the sense that if the conjectured vorticity growth scenario in Figure~\ref{fig_dipole} is true for some initial data, then the solution would have exactly these growth rates for $P_k$, $\|u_r\|_{L^\infty}$ and $\|\omega\|_{L^\infty}$. 

\smallskip
 Note that the bound $\|\omega\|_{L^\infty}\lesssim t^{4/3}$ in \eqref{upper_bd_omega} is the same as in Lim--Jeong \cite{LJ}, but we do not require $\omega_0$ to be compactly supported, and replace it by the more relaxed assumption $P_2(0)<\infty$. The same bound is also recently derived by Shao--Wei--Zhang \cite{SWZ} under the assumptions $u_0 \in L^2 \cap C^{1,\alpha}$ and $\omega_0, \omega_0/r \in L^\infty$. Compared to their result, we require less regularity, but more integrability assumptions.

\medskip
\noindent\textbf{Lower bounds for the solutions}.
Next we move on to deriving lower bounds for solutions, where we need to impose additional symmetry and sign conditions. In addition to \eqref{assu_w0}, we also require the initial vorticity to be odd in $z$, and and non-negative on the upper half-space $\{z>0\}$:
\begin{equation} \label{w0_odd}
\omega_0 (r,-z) = -\omega_0(r,z), \quad\text{ and }\omega_0 (r,z) \ge 0 \quad \text{for} \, \, z\ge 0.
\end{equation}
It is known that these properties are preserved for all $t$, thus it suffices to track the evolution of $\omega$ in the first quadrant \[
\Pi^{+}:= \{(r,z): \, r>0, z>0 \}.
\]

Under these additional symmetry and sign conditions, in the next theorem we show that $P_2(t)\gtrsim 1+\frac{t}{\log(2+t)}$, improving the previous bounds $P_2(t)\gtrsim t^{\frac{3}{4}-}$ in the literature \cite{Choi-Jeong, GMT}.

\begin{theorem}[$P_2(t)$ has at least $t^{1-}$-growth] \label{thm:main2}
In addition to \eqref{assu_w0}, assume that the initial data $\omega_0$ satisfies \eqref{w0_odd},  and $P_2(0)<\infty$, $Z(0)<\infty$. Then there exists a constant $c>0$ depending on $\omega_0$ and $u_0$, such that
\begin{equation} \label{sublinear_growth}
P_2(t) \ge P_2(0) + \frac{c(\omega_0,u_0) \, t}{\log (2+t)} \quad\text{ for all }t\geq 0.
\end{equation}
\end{theorem}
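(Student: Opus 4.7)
The plan is to establish a differential inequality of the form $P_2'(t)\ge c/\log(2+t)$ and integrate it. The derivation splits into an evolution identity, a sharp lower bound, and a matching with the upper bound from Theorem~\ref{thm:main1}.

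\emph{Evolution identity.} Both the odd-in-$z$ symmetry and the positivity of $\omega$ on $\Pi^+$ are preserved by the flow (since $u_z\equiv 0$ on $\{z=0\}$, so no vorticity crosses the equatorial plane), hence $P_2(t)=2\iint_{\Pi^+}r^2\omega\,dr\,dz$. Substituting the transport equation $\partial_t\omega=-u_r\partial_r\omega-u_z\partial_z\omega+(u_r/r)\omega$ (obtained from \eqref{eq:Euler2}) into $\frac{d}{dt}P_2(t)$, integrating by parts in $r$ and $z$, and using the axisymmetric incompressibility $\partial_r u_r+u_r/r+\partial_z u_z=0$ together with $\omega(r,0,t)=0$, one derives
\begin{equation}\label{eq:plan_dP}
P_2'(t)=4\iint_{\Pi^+} r\,u_r(r,z,t)\,\omega(r,z,t)\,dr\,dz.
\end{equation}
Representing $u_r$ via the axisymmetric Biot--Savart law and folding the image contribution from $\Pi^-$ into the kernel rewrites the right-hand side of \eqref{eq:plan_dP} as a non-negative symmetric quadratic form
$$\iint_{\Pi^+}\!\iint_{\Pi^+}\calM(r,z;r',z')\,\omega(r,z)\,\omega(r',z')\,dr\,dz\,dr'\,dz',$$
where $\calM$ is a ``dipole'' kernel encoding the outward radial velocity generated by a pair of anti-parallel rings.

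\emph{Sharp lower bound.} The heart of the proof is
\begin{equation}\label{eq:plan_key}
P_2'(t)\ge\frac{c(\omega_0,u_0)}{\log(2+P_2(t))}.
\end{equation}
To establish \eqref{eq:plan_key}, I would combine the conserved quantities $\|\omega\|_{L^1(\Pi)}$, $\|\omega/r\|_{L^1\cap L^\infty}$, the finite kinetic energy $E$, and the non-increasing, strictly positive vertical moment $Z(t)$ to confine a fixed fraction of $\iint_{\Pi^+}\omega$ inside a region of $\Pi^+$ where a sharp pointwise lower bound on $\calM$ is available. Locally, $\calM$ reduces to the 2D anti-parallel vortex-pair kernel; the full axisymmetric integral then aggregates contributions across the dyadic range of radii on which the vorticity lives, producing exactly one logarithmic factor.

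\emph{Integration.} The bound $P_2(t)\le C(\omega_0,u_0)(1+t)^2$ from the $k=2$ case of Theorem~\ref{thm:main1} converts $\log(2+P_2(t))$ in \eqref{eq:plan_key} into a constant multiple of $\log(2+t)$, giving $P_2'(t)\ge c'(\omega_0,u_0)/\log(2+t)$; integrating from $0$ to $t$ yields \eqref{sublinear_growth}.

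The hard part is \eqref{eq:plan_key}. The earlier bounds in \cite{Choi-Jeong,GMT} were obtained by estimating the quadratic form at a single dominant scale of the vorticity support, yielding only polynomial rates (at best $t^{3/4-}$ in \cite{GMT}). Reaching the almost-linear $t/\log t$ rate requires a multiscale accounting of the radial velocity contributions across all scales of radius on which $\omega$ is supported, with the escape of vorticity to scales where $\calM$ is too small excluded quantitatively by the finite kinetic energy.
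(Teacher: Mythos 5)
Your plan correctly identifies the overall shape of the argument (establish $P_2'(t)\gtrsim 1/\log(\cdot)$, convert $\log P_2$ to $\log t$ via the quadratic upper bound of Theorem~\ref{thm:main1}, and integrate), but the central inequality \eqref{eq:plan_key} is asserted rather than proved, and the mechanism you sketch for it is not the one that works. You propose to rewrite $P_2'(t)=4\iint_{\Pi^+}ru_r\omega\,drdz$ as a symmetric double-integral quadratic form in $\omega$ with a Biot--Savart kernel, and then do a ``multiscale accounting'' across dyadic radii. This is precisely the route of Choi--Jeong and Gustafson--Miller--Tsai that the paper deliberately abandons -- those kernel representations are quite involved and led only to $t^{3/4-}$ -- and you give no concrete argument for why aggregating dyadic kernel contributions should produce exactly one logarithm. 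That is where the difficulty lives, and the sketch does not engage with it.

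The paper's key idea, absent from your proposal, is to integrate by parts once more and obtain the \emph{line-integral} representation $P_2'(t)=\int_0^\infty r\,u_r(r,0,t)^2\,dr$ involving only $u_r$ on the $r$-axis, with no kernel at all. Cauchy--Schwarz on this single integral gives
\[
P_2'(t)\;\ge\;\frac{\bigl(\int_a^b u_r(r,0,t)\,dr\bigr)^2}{\log(b/a)}\qquad\text{for any }0<a<b,
\]
so the logarithm appears simply as $\int_a^b dr/r$, not from dyadic kernel estimates. The numerator is bounded below uniformly in $t$ via the identity $\int_0^\infty u_r(r,0,t)\,dr=\iint_{\Pi^+}\bigl(1-z/\sqrt{r^2+z^2}\bigr)\omega\,drdz$, combined with $Z(t)\le Z(0)$ (to control mass at large $z$) and $\|\omega/r\|_{L^\infty}$ (to control mass near the $z$-axis). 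The tails $\int_0^a$ and $\int_b^\infty$ are then removed by choosing $a(t)\sim(1+t)^{-1/3}$ and $b(t)\sim 1+t$, using the $\|u_r\|_{L^\infty}\lesssim t^{1/3}$ and $P_2\lesssim t^2$ upper bounds, so $\log(b/a)\sim\log(2+t)$. Without the line-integral representation and the one-dimensional Cauchy--Schwarz step, your plan has no route to the $1/\log$ lower bound.
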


This immediately leads to the following corollary for patch-type initial data $\omega_0/r=1_D$:
\begin{corollary}[$t^{\frac{1}{2}-}$ growth of $\|\omega\|_{L^\infty}$ for patch-type solution]
Consider patch-type initial data $\omega_0/r=1_D$, where $D\subset \Pi^+$ is a compact set. Then the global-in-time solution $\omega(\cdot,t)$ (which is also of patch type) satisfies 
\[\|\omega(t)\|_{L^\infty} \geq c(\omega_0,u_0) \Big(1+ \frac{t}{\log(2+t)}\Big)^{1/2}.
\]
\end{corollary}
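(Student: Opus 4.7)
The plan is to reduce the corollary to Theorem~\ref{thm:main2} via an elementary inequality that is specific to patch-type data. The key observation is that for such data one has $P_2(t) \leq A \cdot \|\omega(\cdot,t)\|_{L^\infty}^2$ for some conserved positive quantity $A$; combined with the almost-linear lower bound on $P_2$ from Theorem~\ref{thm:main2}, this immediately yields the claim.

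To realize this, I would first track the patch structure. Since $\omega/r$ is transported along particle trajectories by \eqref{eq:Euler2}, the identity $\omega(\cdot,t)/r = 1_{D(t)}$ persists for all $t \geq 0$, where $D(t) \subset \Pi^+$ is the image of $D$ under the $(r,z)$-projection of the axisymmetric flow. (Here I implicitly take $\omega_0$ to be the odd-in-$z$ extension of $r\cdot 1_D$, so that the symmetry hypothesis \eqref{w0_odd} of Theorem~\ref{thm:main2} holds.) Since the 3D flow is volume preserving and the 3D volume of the patch equals $2\pi \iint_{D(t)} r\, dr\, dz$, one obtains the conserved positive quantity
\[
A := \iint_{D(t)} r\, dr\, dz = \iint_D r\, dr\, dz,
\]
which is essentially $(2\pi)^{-1} \|\omega_0/r\|_{L^1(\mathbb{R}^3)}$. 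Writing $M(t) := \|\omega(\cdot,t)\|_{L^\infty(\mathbb{R}^3)} = \sup_{(r,z)\in D(t)} r$, so that $D(t) \subset \{r\leq M(t)\}$, one factor of $r^2$ in $P_2(t)$ can be peeled off:
\[
P_2(t) = \iint_{D(t)} r \cdot r^2 \, dr\, dz \;\leq\; M(t)^2 \iint_{D(t)} r\, dr\, dz \;=\; A\cdot M(t)^2,
\]
and hence $M(t) \geq (P_2(t)/A)^{1/2}$.

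Compactness of $D\subset\Pi^+$ ensures that $u_0\in L^2$ and that $P_2(0),Z(0) <\infty$, so every hypothesis of Theorem~\ref{thm:main2} is in force. Applying it gives $P_2(t) \geq P_2(0) + c\, t/\log(2+t) \gtrsim 1 + t/\log(2+t)$, and inserting this into the inequality $M(t)\geq (P_2(t)/A)^{1/2}$ yields the corollary. I do not anticipate any substantive obstacle: the entire content beyond Theorem~\ref{thm:main2} is the trivial pointwise bound $P_2 \leq A\,M^2$ that the patch structure makes available, together with the preservation of $A$ by incompressibility of the 3D flow.
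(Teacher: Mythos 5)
Your proposal is correct and follows essentially the same route as the paper: the paper also deduces $\|\omega(t)\|_{L^\infty}=\sup_{D(t)}r\gtrsim P_2(t)^{1/2}$ from the conservation of $\iint_{\Pi^+}\omega\,drdz=\iint_{D(t)}r\,drdz$ (your quantity $A$) and then invokes Theorem~\ref{thm:main2}. Your write-up merely makes explicit the verification of the hypotheses and the identification of $A$ with the preserved patch volume, which the paper leaves implicit.
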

To see this, note that by conservation of $\int_{\Pi^+} \omega dr$, the maximum radius of vorticity support must grow like $P_2(t)^{1/2}$ at least. Since $\omega/r=1$ in the support for all times, it implies $\|\omega(t)\|_{L^\infty} \gtrsim P_2(t)^{1/2}$, which allows us to apply the lower bound of $P_2(t)$ in Theorem~\ref{thm:main2} to conclude.

\medskip
However, for smooth initial data, growth of $P_2(t)$ does not imply growth of $\|\omega\|_{L^\infty}$: if there is a large set where $\omega_0>0$  but has very low value, then one can have growth of $P_2$ if the vorticity in this set moves outwards, without causing growth in $\|\omega\|_{L^\infty}$. This is why Choi--Jeong \cite[Corollary~1.4]{Choi-Jeong} need to impose the condition that $r/\omega_0$ satisfies some integrability condition in its support, which only holds for certain $C^{1,\gamma}$ initial data for small $\gamma$.
Also, note that even for patch-type initial data, although the growth of $P_2$ implies growth for $\|\omega\|_{L^p(\mathbb{R}^3)}$ for sufficiently large $p$ \cite[Corollary 1.4]{Choi-Jeong},  the range cannot be extended for all $p\in[1,\infty]$. 

\medskip
In our next result, we aim to obtain the growth of $\|\omega\|_{L^p}$ using a completely different approach from \cite{Choi-Jeong, GMT}, without relying on the radial moment $P_2(t)$ at all. Instead, we use the evolution of the vertical moment $Z(t)$ in a crucial way. This lead to the following growth for all $L^p(\mathbb{R}^3)$ norms for $1\leq p\leq \infty$, for all initial data $\omega_0$  (including those smooth ones) satisfying \eqref{assu_w0}, \eqref{w0_odd}, and $Z(0)<\infty$.

\begin{theorem}[Growth of $\|\omega\|_{L^p(\mathbb{R}^3)}$ for all $1\leq p\leq \infty$] \label{thm:main3}
Under the assumptions \eqref{assu_w0}, \eqref{w0_odd}, and $Z(0)<\infty$, let $\omega(\cdot,t)$ be the corresponding global-in-time solution of \eqref{eq:Euler2}. Then there exists some constant $c(\omega_0,u_0)>0$ that only depends on the initial data, such that
\begin{equation} \label{limsup_growth}
\limsup_{t \to \infty} \frac{\|\omega (\cdot,t)\|_{L^{p}(\mathbb{R}^3)}}{t^{1/4}} \geq c(\omega_0,u_0)>0 \quad\text{ for all }1\leq p\leq \infty.
\end{equation}
\end{theorem}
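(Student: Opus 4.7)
The plan is to prove Theorem~\ref{thm:main3} by contradiction, with the evolution of $Z(t)$ playing the central role via a new explicit dissipative identity. Starting from $Z'(t)=\iint_{\Pi} u^z\omega\,drdz$ (a consequence of the transport equation for $\omega$) and substituting $\omega=-\partial_z u^r+\partial_r u^z$, I would integrate by parts in $r$ and in $z$ using incompressibility $\partial_r(r u^r)+\partial_z(r u^z)=0$, decay at infinity, and the axis condition $u^r(0,z)\equiv 0$, to obtain
\begin{equation*}
Z'(t)=-\iint_{\Pi}\frac{(u^r)^2}{r}\,drdz-\frac{1}{2}\int_{-\infty}^{\infty}(u^z(0,z,t))^2\,dz.
\end{equation*}
The axis term arises from $\int_0^\infty u^z\,\partial_r u^z\,dr=-\tfrac12(u^z(0,z))^2$, as $u^z$ need not vanish on the axis. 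Both terms on the right are nonpositive, so $Z'(t)\le 0$; since $Z(t)\geq 0$ by the sign assumption on $\omega_0$, integrating over $[0,\infty)$ gives the two finiteness statements
\begin{equation*}
\int_0^\infty\!\!\iint_{\Pi}\frac{(u^r)^2}{r}\,drdz\,dt\leq Z(0),\qquad \int_0^\infty\!\!\int_{-\infty}^{\infty}(u^z(0,z,t))^2\,dz\,dt\leq 2Z(0).
\end{equation*}

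By H\"older's inequality and the conservation of $|\mathrm{supp}\,\omega(t)|_{\mathbb{R}^3}=|\mathrm{supp}\,\omega_0|_{\mathbb{R}^3}$ under the incompressible flow, one has $\|\omega(\cdot,t)\|_{L^p(\mathbb{R}^3)}\geq c_p\|\omega(\cdot,t)\|_{L^1(\mathbb{R}^3)}$ for every $p\in[1,\infty]$, so it suffices to prove the $t^{1/4}$ limsup lower bound for $\|\omega\|_{L^1(\mathbb{R}^3)}=2\pi P_1(t)$. To link $P_1$ to the bounds above, I would compute $P_1'(t)=2\iint_{\Pi^+} u^r\omega\,drdz$ and apply Cauchy--Schwarz with the splitting $u^r\omega=(u^r/\sqrt{r})(\sqrt{r}\,\omega)$, yielding $|P_1'(t)|\le C\bigl(\iint (u^r)^2/r\bigr)^{1/2}\|\omega(\cdot,t)\|_{L^2(\mathbb{R}^3)}$. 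A further Cauchy--Schwarz in $t$ together with the Step~1 bound then gives
\begin{equation*}
(P_1(T)-P_1(0))^2\leq CZ(0)\int_0^T\|\omega(\cdot,t)\|_{L^2(\mathbb{R}^3)}^2\,dt.
\end{equation*}

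To close the argument, I would assume for contradiction $P_1(t)\leq \varepsilon t^{1/4}$ for $t\geq t_0$ and use the interpolation $\|\omega\|_{L^2}^2\leq\|\omega\|_{L^1}\|\omega\|_{L^\infty}$, combined with the upper bound $\|\omega\|_{L^\infty}\leq C t^{4/3}$ from Theorem~\ref{thm:main1} and the contradiction hypothesis on $P_1$, to control $\int_{t_0}^T\|\omega\|_{L^2}^2\,dt$ by an $\varepsilon$-power and a $T$-power. This upper estimate on $P_1(T)$ must be matched against an \emph{independent} lower bound on $P_1(T)$ extracted purely from the second axis integral $\int(u^z(0,z,t))^2\,dz$, via a quantitative Biot--Savart analysis on the axis that expresses $u^z(0,z)$ as a convolution-type functional of $\omega$ and produces a pointwise or integrated lower bound tying $\|u^z(0,\cdot)\|_{L^2}^2$ to $\omega$'s spatial structure; balancing the two estimates then forces the $t^{1/4}$ threshold and yields the contradiction for small enough $\varepsilon$. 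The main obstacle will be exactly this axis-based lower bound on $P_1$: since the paper emphasizes that the argument avoids the $P_2$-growth of Theorem~\ref{thm:main2}, the lower bound must be derived solely from the $Z$-identity, and executing this Biot--Savart estimate sharply and uniformly for all $p\in[1,\infty]$ without circularity is the principal technical challenge.
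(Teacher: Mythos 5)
You correctly identify the starting point: the identity $-Z'(t)=\int_0^\infty \tfrac{1}{2}u_z(0,z,t)^2\,dz+\iint_{\Pi^+}\tfrac{u_r^2}{r}\,drdz$ is exactly the paper's \eqref{vertical_1}, and the observation that $Z$ is decreasing and positive, hence $-Z'$ is integrable in time, is also the paper's engine. But the proposal has a genuine gap at its core. The paper's key quantitative step (Proposition~\ref{prop:new}) is a lower bound of the form $-Z'(t)\ge c_0 m^4R^{-4}$ whenever the vorticity mass in the bounded region $\{r\le R\}\cap\Pi^+$ is at least $m$; proving it requires a Biot--Savart kernel estimate for $u_r$ on the $r$-axis over the sector $\{r\ge\sqrt3 z\}$ (Lemma~\ref{lem:new1}), a Green's-theorem argument on rectangles $Q_{\tilde R,\tilde z}$ to propagate that bound to nearby horizontal lines, and a pigeonhole in $\tilde R$ using the energy to control the vertical side contribution. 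Your proposal never produces this (or any) lower bound: the decisive step is described only as ``a quantitative Biot--Savart analysis on the axis'' and explicitly deferred as ``the principal technical challenge.'' Without it the contradiction scheme cannot close; indeed your Cauchy--Schwarz chain only yields $P_1(T)^2\lesssim Z(0)\int_0^T\|\omega\|_{L^2}^2\,dt\lesssim \varepsilon\,T^{31/12}$ under the hypothesis $P_1\le\varepsilon t^{1/4}$ and $\|\omega\|_{L^\infty}\lesssim t^{4/3}$, which is far weaker than the hypothesis itself and produces no contradiction on its own.

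There are two further problems. First, the reduction $\|\omega\|_{L^p}\ge c_p\|\omega\|_{L^1}$ via conservation of $|\mathrm{supp}\,\omega|$ requires the support to have finite measure, which is not among the hypotheses (only \eqref{assu_w0}, \eqref{w0_odd}, $Z(0)<\infty$); and even granting it, the constant degenerates as the relevant quantity is really $|\mathrm{supp}\,\omega_0|^{1/p-1}$, so uniformity over $p$ needs care. Second, $P_1$ is not the right quantity to contradict: what the integrability of $-Z'$ forces is that the mass $m_R(t)=\iint_{\{r\le R\}}\omega\,drdz$ must drop below a fixed threshold at some $t_R\le C R^4$, and converting ``mass has left $\{r\le R\}$'' into growth of \emph{all} $L^p$ norms requires the conservation of the distribution function of $\omega/r$ with respect to $d\mu=r\,drdz$ (the level-set argument with $D_\delta(t)$ in the paper), a step absent from your outline. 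In short, the skeleton (monotone $Z$, integrable $-Z'$) matches the paper, but both load-bearing components --- the quantitative lower bound on $-Z'$ and the passage from escaping mass to $L^p$ growth --- are missing.
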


Although the growth rate $t^{1/4}$ is not optimal, to the best of our knowledge, this seems to be the first example that gives unbounded growth of $\|\omega\|_{L^p}$  for $\omega_0\in C_c^\infty(\mathbb{R}^3)$ for  3D Euler equation in the whole space, even without restricting to the axisymmetric setting.

\medskip
\noindent
\textbf{Almost no vorticity is left behind in any bounded region.} Note that although \eqref{thm:main3} gives the $L^p$-norm growth of vorticity, the growth of norm are likely driven by the ``head'' part of the vorticity located near the outmost part of the support (e.g., the part in the orange rectangle on the right of Figure \ref{fig_dipole}), and it does not give much information on the ``tail'' part of the vorticity that is left behind.

Under the symmetry and sign assumptions \eqref{w0_odd}, since $\omega(\cdot,t)\geq 0$ on $\Pi^+$ for all times,  and the ``total vorticity mass'' on $\Pi^+$, given by $\iint_{\Pi^+} \omega(r,z,t) drdz$, is conserved in time, it is natural to ask where the vorticity is located as time evolves. Along the proof of Theorem~\ref{thm:main3}, we obtain a by-product that sheds light on this question. Heuristically speaking, for any bounded region $\Pi^+\cap\{r\le R\}$, we prove that as $t\to\infty$, almost no vorticity remains in this region in the time-integral sense. Since $R$ is arbitrary, this means that, in the time-integral sense, nearly all vorticity must eventually escape to $r\to\infty$. The precise statement is as follows:

\begin{theorem}[Nearly all vorticity escapes to $r\to\infty$ in the time-integral sense] \label{thm:main4}
Under the assumptions \eqref{assu_w0}, \eqref{w0_odd}, and $Z(0)<\infty$, let $\omega(\cdot,t)$ be the corresponding global-in-time solution of \eqref{eq:Euler2}. Then there exists some constants $C(\omega_0,u_0)>0$ and $R_0(\omega_0,u_0)\ge 1$ that only depends on the initial data, such that the following holds: for any $R\geq R_0$, defining the ``vorticity mass'' in $\Pi^{+}\cap \{r\le R\}$ as
\[
m_R(t) := \iint_{\Pi^{+} \cap \{r\le R\}} \omega(r,z,t) \,drdz,
\]
we have that $m_R(t)$ satisfies
\begin{equation} 
\int_0^\infty m_R(t)^4 dt \leq C(\omega_0,u_0) R^4 <\infty.
\end{equation}
\end{theorem}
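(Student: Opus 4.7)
The plan is to obtain Theorem~\ref{thm:main4} as a by-product of the key dissipation estimate for the vertical moment $Z(t)$ that also drives Theorem~\ref{thm:main3}. Under \eqref{w0_odd}, the moment $Z$ enjoys three useful properties: (i) $Z(t)\ge 0$, since $\omega\ge 0$ and $z\ge 0$ on $\Pi^+$ and $Z(t)=2\iint_{\Pi^+} z\,\omega\,dr\,dz$; (ii) $Z(t)$ is non-increasing; and (iii) $Z(0)<\infty$ by assumption. Property~(ii) follows from the standard computation (integration by parts in \eqref{eq:Euler2}, using incompressibility)
\[
Z'(t)=\iint_\Pi u_z\,\omega\,dr\,dz=2\iint_{\Pi^+} u_z\,\omega\,dr\,dz,
\]
together with the fact, proved via the axisymmetric Biot--Savart representation and the mirror symmetry in \eqref{w0_odd}, that the right-hand side is non-positive (the image vorticity in $\Pi^-$ pulls the upper ring downward, as in Choi--Jeong). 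Consequently we have the finite ``dissipation budget''
\[
\int_0^\infty (-Z'(t))\,dt\le Z(0)<\infty.
\]

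The heart of the proof is to convert this dissipation into a pointwise-in-$t$ lower bound on $-Z'(t)$ in terms of $m_R(t)$:
\[
-Z'(t)\ge \frac{c(\omega_0,u_0)\,m_R(t)^4}{R^4}\qquad (R\ge R_0,\ t\ge 0).\qquad(\ast)
\]
To establish $(\ast)$, I would first express $-u_z(r_0,z_0,t)$ for $(r_0,z_0)\in\Pi^+$ via Biot--Savart, subtracting the mirror contribution from $\Pi^-$, and obtain an integral representation over $\Pi^+$ against a strictly positive kernel that encodes the attracting effect of the image. Next, by evaluating at points in $\Pi^+\cap\{r\le R\}$ and controlling the far-field contributions through the conserved quantities $\|\omega/r\|_{L^\infty}$ and the kinetic energy $E$, together with the $z$-tail estimate $\iint_{|z|>H}\omega\,dr\,dz\le Z(t)/H\le Z(0)/H$ (used to truncate the $z$-direction at a suitable height $H$), one obtains a pointwise lower bound of the form $-u_z(r_0,z_0,t)\gtrsim m_R(t)/R$ on a substantial subset of $\Pi^+\cap\{r\le R\}$. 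A H\"older-type interpolation, trading $\|\omega/r\|_{L^\infty}$ against $E$ to boost the power of $m_R$, then upgrades the naive bound $-Z'\gtrsim m_R^2/R$ into the quartic estimate $(\ast)$.

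Integrating $(\ast)$ in $t$ and invoking the dissipation budget immediately gives $\int_0^\infty m_R(t)^4\,dt\le Z(0)\,R^4/c(\omega_0,u_0)$, as desired. The main obstacle is producing the correct quartic power $m_R^4$ and the sharp $R^{-4}$ scaling in $(\ast)$: the direct Biot--Savart estimate yields only $-Z'(t)\gtrsim m_R(t)^2/R$ (equivalently $\int_0^\infty m_R^2\,dt\lesssim R$), and upgrading this to the quartic form requires a delicate interpolation between the conserved quantities $\|\omega/r\|_{L^\infty}$ and $E$, in the spirit of the inequalities used by Lim--Jeong and Shao--Wei--Zhang to obtain their $t^{4/3}$ upper bounds. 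Making this interpolation mesh with the dissipation budget $Z(0)$ in such a way that all constants depend only on the initial data is the technical crux, and it is the same ingredient powering Theorem~\ref{thm:main3}.
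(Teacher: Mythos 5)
You correctly identify the high-level structure: $Z(t)\ge 0$, $Z(t)$ is decreasing, $Z(0)<\infty$, so $\int_0^\infty (-Z'(t))\,dt\le Z(0)$; then a pointwise estimate $-Z'(t)\gtrsim m_R(t)^4 R^{-4}$ for $R\ge R_0$ (this is exactly Proposition~\ref{prop:new} in the paper, applied with $m=m_R(t)$) would finish the proof by integration. That framework matches the paper.

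However, your sketch of the key estimate $(\ast)$ has a genuine gap, and in fact pursues a different (and, as written, incomplete) route from the paper's. You start from the representation $-Z'(t)=-\iint_{\Pi^+} u_z\,\omega\,dr\,dz$ and propose to get a pointwise lower bound on $-u_z$ over ``a substantial subset'' of $\{r\le R\}$, then interpolate $\|\omega/r\|_{L^\infty}$ against $E$ to upgrade from $m_R^2/R$ to $m_R^4/R^4$. Neither step is substantiated. For the pointwise bound, if the vorticity mass is concentrated near the $z$-axis, the induced $-u_z$ at those points need not be of order $m_R/R$, so the claimed bound ``on a substantial subset'' does not obviously hold. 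For the interpolation, no concrete inequality is produced, and it is unclear how trading $\|\omega/r\|_{L^\infty}$ for $E$ would convert a linear-in-$m_R$ velocity bound into a quartic-in-$m_R$ dissipation bound with the sharp $R^{-4}$ scaling; you acknowledge this yourself as ``the technical crux.''

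The paper's actual mechanism is different and is the missing idea: one integrates by parts twice in $-Z'(t)=\iint z\,\partial_t\omega\,dr\,dz$ to obtain the new representation
\[
-Z'(t)=\int_0^\infty \tfrac{1}{2}\,u_z(0,z,t)^2\,dz+\iint_{\Pi^+}\frac{u_r(r,z,t)^2}{r}\,dr\,dz,
\]
a sum of two manifestly non-negative quadratic quantities in the velocity. Proposition~\ref{prop:new} is then proved by a case analysis on whether the mass $m$ in $\{r\le R\}$ lives mostly in the sector $\{r<\sqrt{3}z\}$ or its complement: in the first case one lower-bounds $\int u_z(0,\cdot)^2$ using the Biot--Savart formula on the $z$-axis, the $z$-tail bound from $Z(t)\le Z(0)$, and Cauchy--Schwarz; in the second case one uses a Biot--Savart lower bound for $\int_0^R u_r(r,0,t)\,dr$ (Lemma~\ref{lem:new1}), Green's theorem on thin rectangles to propagate this to $\int_0^{2R}|u_r(r,\tilde z,t)|\,dr$ for $\tilde z\lesssim m^2 R^{-2}$, and Cauchy--Schwarz again to produce the $m^4 R^{-4}$ factor. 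Without this representation and the rectangle argument, your outline does not yield $(\ast)$, so the proof is incomplete at the step you yourself flag as the crux.
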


\subsection{Overview of the proof strategy}
 In the proof of the upper bounds in Theorem~\ref{thm:main1}, the key step is to control the evolution of the second moment $P_2(t)$ through an elementary but useful observation: its time derivative can be bounded by an interpolation of kinetic energy, $\|\omega/r\|_{L^\infty}$ and $P_2$ itself. Once this is done, the bounds on $\|u_r\|_{L^\infty}$ and $\|\omega\|_{L^\infty}$ directly follows from applying the $P_2$ bound into an interpolation inequality in Lim--Jeong \cite[Prop. A.1]{LJ}.  Finally, for $k\geq 2$, we obtain upper bounds for $P_k(t)$ by bounding its time derivative using an interpolation among $\|u_r\|_{L^\infty}$, the kinetic energy, $\|\omega/r\|_{L^\infty}$ and $P_k$ itself.

\smallskip
In the proofs of Theorems~\ref{thm:main2}, \ref{thm:main3} and  \ref{thm:main4}, the common theme is to express the time derivative of $P_2(t)$ and $Z(t)$ from a new perspective. Previously, \cite{Choi-Jeong, GMT} have already shown that $P_2'(t)>0$ and $Z'(t)<0$, by expressing their time derivatives as double integrals of the form 
\begin{equation}\label{temp_kernel}
\iint_{\Pi^+}\iint_{\Pi^+} K(r,\overline{r},z,\overline{z}) \,\omega(r,z,t) \omega(\overline{r},\overline{z},t) \, drdz d\overline{r} d\overline{z}
\end{equation} for some  kernel $K$ that is positive in $P_2'(t)$, and negative in $Z'(t)$; in both cases, the expression for the kernel $K$ is quite involved. In this paper we take a completely different approach, and derive new representations for $P_2'(t)$ and $Z'(t)$ using the velocity field $u$, instead of $\omega$ (See Section~\ref{sec41} for the derivation):
\begin{align}\label{der_P}
P_2'(t) &= \int_{0}^{\infty} ru_r(r,0,t)^2 \, dr,\\
\label{der_Z}
-\!Z' (t) &= \int_{0}^{\infty} \frac{u_z(0,z,t)^2}{2} \, dz + \iint_{\Pi^+} \frac{u_r(r,z,t)^2}{r} \, drdz.
\end{align}
We find that when expressed in this form, the integrals on the right hand side are easier to work with, compared to the double integral representation \eqref{temp_kernel}. 

In particular, in the proof of Theorem~\ref{thm:main2}, in order to control $P'(t)$ from below, we apply the Cauchy-Schwarz inequality to the new formulation \eqref{der_P} to obtain \[
P_2'(t) \geq \int_{a(t)}^{b(t)} ru_r(r,0,t)^2 \, dr \ge \left(\int_{a(t)}^{b(t)} u_r(r,0,t) \, dr \right)^2 \left(\int_{a(t)}^{b(t)} r^{-1}dr\right)^{-1}
\]
for all $0<a(t)< b(t)$.
By choosing $b(t)\sim 1+t$, $a(t)\sim (1+t)^{-1/3}$, we are able to show that $\int_{a(t)}^{b(t)} u_r(r,0,t) \, dr$ is bounded below by a positive constant (see Section~\ref{sec42} for details), thus leading to $P_2'(t) \gtrsim (1+\log t)^{-1}$. 

In the proof of Theorems~\ref{thm:main3} and \ref{thm:main4}, the crucial step is to obtain some quantitative lower bound for $-Z'(t)$. Using the new formulation \eqref{der_Z}, we prove that for any sufficiently large $R$ and any positive $m\le m_0$, if the ``vorticity mass'' in the region $\Pi^+\cap \{r\le R\}$ is at least $m$ at some $t$, i.e.,
$
\iint_{\Pi^+\cap\{r\le R\}} \omega(r,z,t) \,drdz \ge m,
$
then we have the quantitative lower bound
\begin{equation}
\label{derZ_temp}
-Z'(t)\geq c(\omega_0,u_0) m^4 R^{-4}
\end{equation} at this time. (See Proposition~\ref{prop:new} for the precise statement.) Roughly speaking, it says that if there is a certain amount of vorticity in the bounded region $\{r\le R\}$ at some time, then $-Z'(t)$ cannot be too small at this time. 

On the other hand, since  $Z(t)$ is monotone decreasing and positive, $-Z'(t)$ is integrable in time over $[0,\infty)$, thus $-Z'(t)$ must decay to zero as $t\to\infty$ in the time-integral sense. We can then obtain Theorem~\ref{thm:main3} by fixing a suitable $m$ only depending on initial data, setting $R\sim t^{1/4}$ and integrate \eqref{derZ_temp} in time. Similarly, we obtain Theorem~\ref{thm:main4} by fixing $R$, choosing $m$ as $m_R(t)=\iint_{r\le R} \omega drdz$, and integrating in time. See Section~\ref{sec5} for the detailed proofs.

\subsection{Organization of the paper} 
In Section~\ref{sec2}, we collect various properties on the axi-symmetric Biot-Savart law and the energy. We then prove Theorem~\ref{thm:main1} for the upper bounds in Section~\ref{sec3}. In Section~\ref{sec4}--\ref{sec5}, we aim to derive various lower bounds on the solutions under the extra assumptions that $\omega_0$ is odd in $z$ and non-negative in $\{z>0\}$. Section~\ref{sec4} is devoted to the proof of Theorem~\ref{thm:main2}, which gives a lower bound of $t^{1-}$ growth rate of the radial moment $P_2$. Finally, in Section~\ref{sec5} we prove Theorems~\ref{thm:main3}--\ref{thm:main4}, which deal with growth of $\|\omega(\cdot,t)\|_{L^p(\mathbb{R}^3)}$, and show that nearly all vorticity escapes to $r\to\infty$ in the time-integral sense.

\ackn{YY has been supported by the MOE Tier 1 grant and the Asian Young Scientist Fellowship. The authors are very grateful to Tarek Elgindi and In-Jee Jeong for helpful discussions.}

\section{Preliminaries} \label{sec2}
\subsection{Notation and conventions}
Throughout this paper, we frequently use the following notation and conventions:
\begin{itemize}
    \item By the $rz$-plane we mean the half-plane \[ \Pi:= \{ (r,z) \, : \, r\ge 0, \, \, z\in \mathbb{R} \}. \] Its upper quadrant is denoted by \[ \Pi^{+}:= \{(r,z) \in \Pi \, : \, z>0 \}. \]

\smallskip
    \item Since the evolution of the vorticity vector $\omega$ is completely determined by the scalar component $\omega^{\theta}$, from now on, we will denote $\omega^\theta$ as $\omega$ (and the initial data $\omega^\theta_0$ as $\omega_0$), and refer to it as the vorticity throughout this paper.

    \smallskip
    \item We use the abbreviation $\omega$ for $\omega (r,z,t)$ and $\overline{\omega}$ for $\omega (\overline{r}, \overline{z},t)$ in integrals and formulas, whenever the dependence is clear from the context. Similarly, we write $u_r$ and $u_z$ for $u_r(r,z,t)$ and $u_z(r,z,t)$, while $\overline{u}_r$ and $\overline{u}_z$ are used for $u_r(\overline{r},\overline{z},t)$ and $u_z(\overline{r},\overline{z},t)$, respectively.

\smallskip
    \item The measure $\mu$ is defined in $\Pi$ by $d\mu:= r \, drdz$. More precisely, for $A\subset \Pi$ we have \[ \mu (A) := \iint_{A} r\, drdz. \]
\end{itemize}

\subsection{Axisymmetric Biot--Savart law}	
We begin by recalling the axisymmetric Biot--Savart law and some useful facts in the standard 3D setting. In the axisymmetric without swirl case, the stream function is recovered from the vorticity by
\begin{equation*}
\psi (r,z,t) = \frac{1}{2\pi} \iint_{\Pi} \sqrt{r\overline{r}} \, F\left( \frac{(r-\overline{r})^2 +(z-\overline{z})^2}{r\overline{r}} \right) \, \overline{\omega} \, d\overline{r}d\overline{z}, 
\end{equation*}
where $F$ is an elliptic integral given by 
\begin{equation} \label{eq:bs_F}
F(s) = \int_{0}^{\pi} \frac{\cos \theta}{(2(1-\cos \theta) + s)^{1/2}} \, d\theta, \quad \quad s>0.
\end{equation}
The velocity field becomes 
\[
u = \frac{1}{r}\nabla^{\perp} \psi, \quad \quad \nabla^{\perp}:= (\partial_z, -\partial_r)\footnote{The usual notation is $\nabla^{\perp}:= (-\partial_z, \partial_r)$, however, in this case, we would need to define the vorticity as negative in the upper quadrant.}
\]
and by denoting \[ D=D(r,z,\overline{r},\overline{z}):= \frac{(r-\overline{r})^2 +(z-\overline{z})^2}{r\overline{r}}, \] we can write the radial and vertical velocity components as
\begin{equation} \label{u_r}
u_r(r,z,t) = \frac{1}{\pi r} \iint_{\Pi} \frac{(z-\overline{z})}{\sqrt{r\overline{r}}} \, F'(D) \, \overline{\omega} \, d\overline{r}d\overline{z},
\end{equation}
\begin{equation} \label{u_z}
u_z(r,z,t) = -\frac{1}{\pi r} \iint_{\Pi} \sqrt{r\overline{r}}\left[\frac{1}{4r} F(D) + \left(\frac{r-\overline{r}}{r\overline{r}} - \frac{D}{2r} \right) F'(D) \right] \, \overline{\omega} \, d\overline{r}d\overline{z}.
\end{equation}

Next, we recall some properties of $F$, namely, its asymptotic behavior near $0$ and $\infty$. It is not difficult to observe that $F$ is strictly positive and decreasing for $s>0$, diverging near $0$ and vanishing as $s$ tends to infinity. Moreover, we have the following asymptotics estimates: 
\begin{align*} 
    F(s) &= \frac{1}{2} \log \frac{1}{s} + \log 8 -2 + O(s\log s), \quad s\rightarrow 0^{+}, \\
    F(s) &= \frac{\pi}{2} \frac{1}{s^{3/2}} + O(s^{-5/2}), \quad s\rightarrow +\infty.    
\end{align*}
Similarly, for $F'(s)$, we observe that it is strictly negative and increasing for $s>0$, with the following asymptotics near the end points: 
\begin{align*} 
    -F'(s) &= \frac{1}{2s} + O(\log s), \quad   s\rightarrow 0^{+}, \\
    -F'(s) &= \frac{3\pi}{4} \frac{1}{s^{5/2}} + O (s^{-7/2}), \quad s\rightarrow +\infty.    
\end{align*}
Continuing this, one can also derive such asymptotics for the $k$-th derivative of $F$. 
We refer the readers to \cite{FeSv} for the proof of these estimates, along with other properties of $F$ and its derivatives.

\subsection{The kinetic energy} 
Recall that the kinetic energy 
\[ E(t)= \frac{1}{2} \iiint_{\mathbb{R}^3} |u(x,t)|^2 \, dx \] 
is conserved in time and finite by our assumptions on the initial data. In the axisymmetric without swirl setting, this becomes 
\[
E(t) = \pi \iint_{\Pi} r(u_r^2+u_z^2) \, drdz,
\]
showing that it has nonnegative contributions from both components. Defining $E_0:= \frac{1}{\pi} E(t)$, we have 
\[
E_0 = \iint_{\Pi} r(u_r^2+u_z^2) \, drdz,
\]
and it can be treated as a positive constant depending only on the initial data.

\section{Upper bounds of the radial moments and $L^\infty$ norm}	
\label{sec3}

The goal of this section is to prove Theorem~\ref{thm:main1}. Before the proof, we state a technical lemma that controls the evolution of the radial moments $P_k := \iint_{\Pi} r^k |\omega| drdz$ in time. Note that if the integral is defined as $\iint_{\Pi} r^k \omega drdz$ (without the absolute value on $\omega$), its time derivative can be directly computed as $ k\iint_{\Pi} r^{k-1}  u_r \omega \, drdz$ (see \cite[Eq.(2.8)]{GMT}). With the absolute value, it is slightly more involved to control $P_k'(t)$, where we overcome the lack of regularity of the absolute value function by approximating it with a sequence of smooth functions.

\begin{lemma}\label{lem_pk_der}
Under the assumptions of Theorem \ref{thm:main1}, for each $k\geq 2$, we have the following differential inequality for $P_k$:
\begin{equation}\label{diff_ineq_pk}
P_k'(t) \leq k\iint_{\Pi} r^{k-1}  |u_r \omega| \, drdz.
\end{equation}
\end{lemma}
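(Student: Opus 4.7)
The plan is to exploit the transport structure in \eqref{eq:Euler2} while handling the non-smoothness of $s\mapsto|s|$ by approximation. First I would introduce a smooth regularization of $|\cdot|$, e.g.\ $\phi_\varepsilon(s):=\sqrt{s^2+\varepsilon^2}-\varepsilon$, which satisfies $\phi_\varepsilon\in C^\infty$, $0\le\phi_\varepsilon(s)\le|s|$, $\phi_\varepsilon(s)\to|s|$ pointwise as $\varepsilon\to 0$, $|\phi_\varepsilon'(s)|\le 1$, and $s\,\phi_\varepsilon'(s)=s^2/\sqrt{s^2+\varepsilon^2}\to|s|$ as $\varepsilon\to 0$. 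From \eqref{eq:Euler2} together with the product rule (noting $(\partial_t+u_r\partial_r+u_z\partial_z)(1/r)=-u_r/r^2$), one immediately obtains
\[
(\partial_t+u_r\partial_r+u_z\partial_z)\omega=\frac{u_r\,\omega}{r},
\]
and the chain rule then gives $(\partial_t+u_r\partial_r+u_z\partial_z)\phi_\varepsilon(\omega)=\phi_\varepsilon'(\omega)\,u_r\,\omega/r$.

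Next, I would multiply this identity by $r^k$, integrate over $\Pi$, and integrate by parts in the transport term. Using the axisymmetric divergence-free relation $\partial_r(ru_r)+\partial_z(ru_z)=0$ (equivalently $\partial_r u_r+u_r/r+\partial_z u_z=0$), the $\partial_r u_r$ and $\partial_z u_z$ contributions collapse, leaving
\[
\frac{d}{dt}\iint_{\Pi}r^k\phi_\varepsilon(\omega)\,drdz = (k-1)\iint_{\Pi}r^{k-1}u_r\phi_\varepsilon(\omega)\,drdz + \iint_{\Pi}r^{k-1}u_r\,\omega\,\phi_\varepsilon'(\omega)\,drdz.
\]
Passing $\varepsilon\to 0$ by dominated convergence with dominating function $r^{k-1}|u_r||\omega|$, both right-hand integrals tend to $\iint_\Pi r^{k-1}u_r|\omega|\,drdz$, while the left-hand side converges to $P_k'(t)$, yielding
\[
P_k'(t) = k\iint_{\Pi}r^{k-1}u_r\,|\omega|\,drdz.
\]
The trivial pointwise bound $u_r|\omega|\le|u_r\omega|$ then implies \eqref{diff_ineq_pk}.

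The main technical point I expect will be justifying the vanishing of boundary contributions at spatial infinity in the integration by parts, since $\omega_0$ need not be compactly supported. I would handle this by inserting a smooth cutoff $\chi_R(r,z)$ supported in $\{|(r,z)|\le 2R\}$ and equal to $1$ on $\{|(r,z)|\le R\}$, carrying the calculation through with $\chi_R$ in the integrand, and sending $R\to\infty$. The error is controlled by $\iint_{R\le |(r,z)|\le 2R} r^{k-1}|u_r||\omega|\,drdz$, which vanishes as $R\to\infty$ because $P_{k-1}(t)$ is finite -- this follows from $P_k(t)<\infty$ and $\omega(\cdot,t)\in L^1$ via H\"older's inequality, $P_{k-1}\le P_k^{(k-1)/k}\|\omega\|_{L^1}^{1/k}$ -- while $\|u_r(\cdot,t)\|_{L^\infty}$ is finite at each fixed $t$ from standard axisymmetric Biot--Savart estimates. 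Boundary terms at $r=0$ vanish automatically due to the factor $r^k$ with $k\ge 2$. Everything else (the chain rule, dominated convergence in $\varepsilon$, and the algebraic simplification via the divergence-free identity) is routine.
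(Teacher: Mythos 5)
Your proposal is correct and takes essentially the same approach as the paper: the same regularization $\sqrt{s^2+\varepsilon^2}-\varepsilon$, integration by parts via $\nabla\cdot(ru)=0$, and passage to the limit $\varepsilon\to 0$ (your variant of applying the chain rule to the stretching form $(\partial_t+u\cdot\nabla)\omega=u_r\omega/r$ rather than to the transport equation for $\omega/r$ is only a bookkeeping difference that spares you the paper's second-derivative term and auxiliary function $g_\epsilon$). One small refinement: rather than asserting that the $\varepsilon$-regularized time derivatives converge pointwise to $P_k'(t)$, integrate in time over $[t_1,t_2]$ before sending $\varepsilon\to 0$, as the paper does, since pointwise convergence of $P_k^\varepsilon$ to $P_k$ does not by itself give convergence of the derivatives.
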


\begin{proof}
To rigorously justify \eqref{diff_ineq_pk}, we will take a regularization of the absolute value function $f(\omega) := |\omega|$ in $P_k$. For any $\epsilon>0$, we approximate $f$ by a smooth function $f_\epsilon\in C^\infty(\mathbb{R})$ defined as
\[
f_\epsilon(s) := \sqrt{s^2 + \epsilon^2} - \epsilon.
\]
For any $s\in \mathbb{R}$, we clearly have $0\leq f_{\epsilon}(s) \leq s$, $|f_\epsilon'(s)|<1$, and $\lim_{\epsilon \searrow 0} f_\epsilon(s)=|s|$.
Using such function $f_\epsilon(\omega)$ to replace $|\omega|$ in $P_k$, we introduce
\[
P_k^\epsilon(t) := \iint_{\Pi} r^k  f_\epsilon(\omega(r,z,t)) \, drdz,
\]
and compute its time derivative as follows:
\[
\begin{split}
\frac{d}{dt} P_k^\epsilon(t) &=  \iint_{\Pi} r^{k} f_\epsilon'(\omega) \partial_t \omega \, drdz =  \iint_{\Pi} r^{k+1} f_\epsilon'(\omega) \partial_t \Big( \frac{\omega}{r}\Big) \, drdz \\
&=  -\iint_{\Pi} r^{k+1} f_\epsilon'(\omega)  u \cdot \nabla \Big(\frac{\omega}{r}\Big)  \, drdz = -\iint_{\Pi} r^{k} f_\epsilon'(\omega) \,  (r u) \cdot \nabla \Big(\frac{\omega}{r}\Big)  \, drdz,
\end{split}
\]
where we denote $u := (u_r, u_z)$, and $\nabla := (\partial_r, \partial_z)$ as the gradient in the $rz$-plane. Applying divergence theorem to the last integral, and using the fact that  $\nabla\cdot (ru)=0$, we have
\begin{equation}\label{pk_temp}
\begin{split}
\frac{d}{dt} P_k^\epsilon(t) &= \iint_{\Pi} \nabla (r^{k} f_\epsilon'(\omega)) \cdot  u \omega \, drdz \\
&=  \iint_{\Pi} k r^{k-1} f_\epsilon'(\omega)   u_r \omega \, drdz + \iint_{\Pi} r^k f_\epsilon''(\omega) \nabla\omega \cdot u \omega \, drdz =: I_1 + I_2.
\end{split}
\end{equation}
Introducing 
\[
g_\epsilon(s) := \int_0^s f_\epsilon''(y) y \,dy,
\] we can rewrite $I_2$ as
\[
\begin{split}
I_2 &= \iint_{\Pi} r^k \underbrace{f_\epsilon''(\omega) \omega}_{=g_\epsilon'(\omega)} \nabla\omega \cdot u ~drdz =  \iint_{\Pi} r^k \nabla (g_\epsilon(\omega) ) \cdot u ~drdz\\
&=    -\iint_{\Pi} (k-1)r^{k-1} g_\epsilon(\omega)  u_r  ~drdz.\\
\end{split}
\]
Plugging it into \eqref{pk_temp} and integrating in time, the following holds for any $t_2>t_1\geq 0$:
\[
P_k^\epsilon(t_2) - P_k^\epsilon(t_1) \leq \int_{t_1}^{t_2} \iint_{\Pi} k r^{k-1} |f_\epsilon'(\omega)   u_r \omega| \, drdz dt + \int_{t_1}^{t_2} \iint_{\Pi} (k-1) r^{k-1} |g_\epsilon(\omega)   u_r| \, drdz dt.
\]
Since $g_\epsilon(s) = f_\epsilon'(s) s - f_\epsilon(s) = \epsilon-\frac{\epsilon^2}{s^2+\epsilon^2}$, one can check that $|g_\epsilon(s)| \leq \min\{\epsilon,|s|\}$. Sending $\epsilon\searrow 0$ and applying dominating convergence theorem, we arrive at
\[
P_k(t_2)-P_k(t_1) \leq  \int_{t_1}^{t_2} \iint_{\Pi} k r^{k-1} |  u_r \omega| \, drdz dt \quad\text{ for all } t_2>t_1\geq 0,
\]
finishing the proof.
\end{proof}

Now we are ready to prove Theorem ~\ref{thm:main1}.

\begin{proof}[Proof of Theorem \ref{thm:main1}]
We divide the proof into the following steps:

\medskip
\noindent \textbf{Step 1: Upper bound for $P_2$}.
Let us first obtain the bound \eqref{upper_bd_pk} for the special case $k=2$. By Lemma \ref{lem_pk_der}, we have
\begin{equation}\label{p2temp}
P_2'(t) \leq 2\iint_{\Pi} r |u_r \omega| \, drdz = 2\iint_{\Pi} r^2 |u_r|  \frac{| \omega|}{r} \, drdz.
\end{equation}
Denote $A_0 := \|\omega_0/r\|_{L^{\infty}}$, which is finite by our assumption on $\omega_0$.  Since $\|\omega(\cdot,t)/r\|_{L^{\infty}}$ is conserved in time, we have $\frac{| \omega|}{r} \leq A_0^{1/2} (\frac{| \omega|}{r})^{1/2}$. Plugging it into \eqref{p2temp} yields
\[
P_2'(t) \leq 2\iint_{\Pi} r^2 |u_r| A_0^{1/2} \Big(\frac{| \omega|}{r}\Big)^{1/2} \, drdz = 2 A_0^{1/2} \iint_{\Pi} r^{3/2} |u_r| |\omega|^{1/2}\, drdz.
\]
We then apply Cauchy--Schwarz inequality to obtain
\[
\begin{split}
P_2'(t) &\leq 2A_0^{1/2} \left(\iint_{\Pi} r|u_r|^2 \,drdz\right)^{1/2} \left( \iint_{\Pi} r^2 |\omega| \, drdz \right)^{1/2}\\
&\leq 2A_0^{1/2} E_0^{1/2} P_2^{1/2},
\end{split}
\]
where we used the conservation of the kinetic energy, which is finite by assumption on the initial data, and $E_0:= \frac{1}{\pi} E(t) = \iint_{\Pi} r(|u_r|^2 + |u_z|^2) \,drdz$, as denoted earlier.

Solving this differential inequality, we have the following quadratic upper bound for $P_2(t)$:
\begin{equation}\label{p2_temp}
P_2(t) \le \left(P_2(0)^{1/2}+(A_0 E_0)^{1/2} t\right)^2.
\end{equation}

\medskip
\noindent \textbf{Step 2: Upper bounds for $\|u_r\|_{L^\infty}$ and $\|\omega\|_{L^\infty}$}.
Next we move on to prove \eqref{upper_bd_u} and \eqref{upper_bd_omega}. (The proof of \eqref{upper_bd_u} for $k>2$ will be postponed till the end, since its proof relies on \eqref{upper_bd_u}.) Using the bound \eqref{p2_temp} that we just obtained, \eqref{upper_bd_u} is a direct consequence when we apply the following interpolation inequality in Lim--Jeong \cite[Prop. A.1]{LJ} (the original inequality is written using norms in $\mathbb{R}^3$, and we rewrite it in the 2-dimensional domain $\Pi$):
\[
\|u_r\|_{L^\infty(\Pi)} \leq C\|\sqrt{r}u\|_{L^2(\Pi)}^{1/3} \Big\|\frac{\omega}{r}\Big\|_{L^\infty(\Pi)}^{1/2} \|r^2\omega\|_{L^1(\Pi)}^{1/6} = C E_0^{1/3} A_0^{1/2} P_2(t)^{1/6},
\]
where $C>0$ is a universal constant.
Plugging \eqref{p2_temp} into above immediately yields 
$\|u_r(\cdot,t)\|_{L^\infty(\mathbb{R}^3)} \leq C_0 (1+t)^{1/3},
$
where $C_0$ depends on $A_0, E_0$ and $P_2(0)$. This gives \eqref{upper_bd_u}. 

Once we have \eqref{upper_bd_u}, the upper bound for vorticity \eqref{upper_bd_omega} directly follows the conservation of $\omega/r$ along flow maps, which we briefly sketch below for the sake of completeness. For a fluid particle originates from $(r,z)$ with $r>0$, denote its flow map at time $t$ by $\phi_t(r,z)$, whose time evolution satisfies $\frac{d}{dt} \phi_t(r,z) = u(\phi_t(r,z),t)$. Applying \eqref{upper_bd_u}, the $r$-component of $\phi_t(r,z)$ (which we denote by $\phi_t^r(r,z)$) satisfies
\[
0<\phi_t^r(r,z) \leq r + \int_0^t \|u_r(s)\|_{L^\infty} ds \leq r + C_1 (1+t)^{4/3},
\] 
where $C_1$ depends on $A_0, E_0$ and $P_2(0)$.
As a result, for any $(r,z)\in\Pi$ with $r>0$, we have
\begin{equation} \label{temp_using}
|\omega(\phi_t(r,z),t)| = \frac{|\omega_0(r,z)|}{r} \phi_t^r(r,z) \leq \frac{|\omega_0(r,z)|}{r} (r+C_1(1+t)^{4/3}).    
\end{equation}
We can then discuss two cases: 
\begin{itemize} 
\item If $r<C_1(1+t)^{4/3}$, we have 
\[
|\omega(\phi_t(r,z),t)| \leq \frac{|\omega_0(r,z)|}{r} \cdot 2C_1(1+t)^{4/3} \leq 2C_1 A_0 (1+t)^{4/3}.
\]
\item If $r\geq C_1(1+t)^{4/3}$, we simply have $|\omega(\phi_t(r,z),t)|\leq 2|\omega_0(r,z)| \leq 2\|\omega_0\|_{L^\infty}$.
\end{itemize}
Combining the above two cases gives 
\[
\|\omega(\cdot,t)\|_{L^\infty} \leq C_2 (1+t)^{4/3},
\]
where $C_2$ depends on $A_0, E_0, P_2(0)$ and $\|\omega_0\|_{L^\infty}$. This finishes the proof of \eqref{upper_bd_omega}.

\medskip
\noindent \textbf{Step 3: Upper bounds for $P_k$ with $k>2$}.
For $k>2$, we again apply Lemma \ref{lem_pk_der} to obtain
\[
P_k'(t) \leq k\iint_{\Pi} r^{k-1}  |u_r \omega| \, drdz = k\iint_{\Pi} r^{k}  |u_r|  \Big|\frac{\omega}{r}\Big| \, drdz
\]
Again, using that $A_0 := \|\omega_0/r\|_{L^{\infty}} = \|\omega(t)/r\|_{L^{\infty}} $, we have
$|\frac{\omega}{r}| \leq A_0^{1-b} |\frac{\omega}{r}|^b $ for any $0<b\leq 1$, where we choose $b=\frac{k-1}{k}$. Plugging it into above gives
\begin{align*}
P_k'(t) &\leq k A_0^{1/k} \iint_{\Pi} r^{k}  |u_r|  \Big|\frac{\omega}{r}\Big|^{(k-1)/k} \, drdz \\
&= k A_0^{1/k}  \iint_{\Pi} r^{1/k}  |u_r| \cdot r^{k-1} |\omega|^{(k-1)/k} \, drdz\\
&\leq k A_0^{1/k} \|u_r\|_{L^\infty}^{(k-2)/k} \iint_{\Pi} r^{1/k}  |u_r|^{2/k} \cdot r^{k-1} |\omega|^{(k-1)/k} \, drdz\\
&\leq k A_0^{1/k} \|u_r\|_{L^\infty}^{(k-2)/k} \left\|r^{1/k}  |u_r|^{2/k}\right\|_{L^k(\Pi)} \left\|r^{k-1} |\omega|^{(k-1)/k}\right\|_{L^{k/(k-1)}(\Pi)}\\
&\leq k A_0^{1/k} \|u_r(t)\|_{L^\infty}^{(k-2)/k} E_0^{\frac{1}{k}} P_k(t)^{\frac{k-1}{k}}.
\end{align*}

Finally, plugging the upper bound \eqref{upper_bd_u} for $\|u_r\|_{L^\infty}$ into above gives
\[
\begin{split}
P_k'(t) &\leq k A_0^{1/k} C_0^{(k-2)/k}  E_0^{1/k} (1+t)^{(k-2)/3k} P_k(t)^{(k-1)/k}\\ 
&\leq k C_3  (1+t)^{(k-2)/3k} P_k(t)^{(k-1)/k}.
\end{split}
\]
Here $C_3$ only depends on $A_0, E_0, P_2(0)$ (and is independent of $k$, since the powers $1/k$ and $(k-2)/k$ are both between 0 and 1).  Solving this differential inequality gives
\[
P_k(t) \leq \left(P_k(0)^{\frac{1}{k}} + C_3 (1+t)^{\frac{4}{3}-\frac{2}{3k}} \right)^k \leq C_4(k, u_0, \omega_0) (1+t)^{\frac{4}{3}k-\frac{2}{3}}, 
\]
finishing the proof of \eqref{upper_bd_pk} for the $k>2$ cases.
\end{proof}

\section{$t^{1-}$ lower bound for the radial moment $P_2$}
\label{sec4}
In this section, we prove Theorem \ref{thm:main2}, which gives a $t^{1-}$ lower bound for the $r^2$-radial moment $P_2$. Recall that the initial vorticity $\omega_0$ is assumed to be odd-in-$z$ and nonnegative in the upper quadrant $\Pi^+$. These assumptions allow us to redefine our radial and vertical moments as 
\begin{equation} \label{eq:radial_vertical}
P_k(t) := \iint_{\Pi^+} r^k \omega\, drdz, \quad \quad Z(t) := \iint_{\Pi^+} z\omega \, drdz.
\end{equation}
From \eqref{eq:radial} and \eqref{eq:vertical}, it follows that the radial and vertical moments over the whole $\Pi$ are simply twice those over $\Pi^+$. Therefore, we adopt the definitions  \eqref{eq:radial_vertical} throughout this section. We also assume finiteness of the initial moments, 
\begin{equation} \label{eq:in_moments}
0 < P_2(0) < \infty, \quad 0 < Z(0) < \infty.
\end{equation}

Before moving to our analysis, we recall the following monotonicity properties of the moments established in Choi--Jeong \cite{Choi-Jeong}:

\begin{lemma}[Theorem 1.1 of \cite{Choi-Jeong}] \label{lem:monotonicity}
Under the assumptions \eqref{assu_w0}, \eqref{w0_odd} and \eqref{eq:in_moments}, the moments $P_2$ and $Z$ defined in \eqref{eq:radial_vertical} satisfy that 
$P_2(t)$ is strictly increasing in time, while $Z(t)$ is strictly decreasing in time. 
\end{lemma}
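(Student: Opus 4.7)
The plan is to establish the two identities
\begin{align*}
P_2'(t) &= \int_0^\infty r\, u_r(r,0,t)^2\, dr, \\
-Z'(t) &= \tfrac{1}{2}\int_0^\infty u_z(0,z,t)^2\, dz + \iint_{\Pi^+} \frac{u_r(r,z,t)^2}{r}\, drdz,
\end{align*}
that are already flagged in the introduction as \eqref{der_P}--\eqref{der_Z}. Once these are in hand, the non-negativity (resp.\ non-positivity) of the right-hand sides immediately gives monotonicity, and strict monotonicity will follow from the Biot--Savart law together with the sign conditions on $\omega_0$.

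For the $P_2$ identity, I would start from the transport equation $\partial_t \omega + u_r \partial_r \omega + u_z \partial_z \omega = (u_r/r)\omega$, which follows from \eqref{eq:Euler2}. Differentiating $P_2(t)$ under the integral, integrating by parts in $r$ and $z$, and using the axisymmetric incompressibility relation $\partial_r(ru_r) + r\partial_z u_z = 0$, the terms combine to give $P_2'(t) = 2\iint_{\Pi^+} r u_r \omega\, drdz$. I would then substitute $\omega = -\partial_z u_r + \partial_r u_z$ and integrate by parts once more: the $-\partial_z u_r$ piece becomes $-\iint r\partial_z(u_r^2)\, drdz$, which reduces to the stated boundary integral $\int_0^\infty r u_r(r,0,t)^2\, dr$ using the decay of $u_r$ as $z\to\infty$; the $\partial_r u_z$ piece, after another integration by parts and another use of incompressibility, becomes a total $\partial_z$-derivative that vanishes because $u_z(r,0,t)=0$ (the odd-in-$z$ symmetry of $\omega$ forces $u_z$ to be odd in $z$).

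The identity for $-Z'(t)$ is derived in the same spirit: the analogous integration by parts applied to $\iint z\partial_t\omega$, combined with incompressibility, first yields $Z'(t) = \iint_{\Pi^+} u_z\omega\, drdz$. After substituting $\omega = -\partial_z u_r + \partial_r u_z$, the $\partial_r u_z$ term integrates in $r$ to $-\tfrac{1}{2}\int_0^\infty u_z(0,z,t)^2\, dz$ (only the $r=0$ boundary survives), while the $-\partial_z u_r$ term becomes, after a $z$-integration by parts and then an $r$-integration by parts with incompressibility, exactly $-\iint u_r^2/r$; the boundary contribution at $r=0$ in this last step vanishes thanks to $u_r(0,z,t)=0$ imposed by axis regularity.

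For strict monotonicity, I would show $u_r(r,0,t)\not\equiv 0$ as follows: using the odd-in-$z$ symmetry in \eqref{u_r}, the Biot--Savart formula symmetrizes to
\[
u_r(r,0,t) = -\frac{2}{\pi r}\iint_{\Pi^+} \frac{\bar z}{\sqrt{r\bar r}}\,F'\!\left(\frac{(r-\bar r)^2+\bar z^{\,2}}{r\bar r}\right) \omega(\bar r,\bar z,t)\, d\bar r d\bar z,
\]
whose integrand is pointwise non-negative since $F'<0$, $\bar z>0$, and $\omega\ge 0$ on $\Pi^+$; non-triviality of $\omega$ then makes this integral strictly positive on a set of positive $r$-measure, so $P_2'(t)>0$. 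A parallel observation that $u_r\not\equiv 0$ on $\Pi^+$ gives $-Z'(t)>0$. The main technical obstacle I anticipate is justifying the vanishing of the boundary terms at $r,z\to\infty$ in the successive integrations by parts; this requires the decay of $u_r,u_z$ coming from the asymptotics of $F,F'$ together with the integrability assumptions \eqref{assu_w0} and the finiteness of $P_2(0),Z(0)$, and if necessary one can introduce smooth cutoffs and pass to the limit.
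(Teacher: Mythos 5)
Your proposal is correct and follows essentially the same route as the paper's own derivation in Section~\ref{sec41}: you obtain the identities \eqref{radial_1} and \eqref{vertical_1} by the same integration-by-parts and incompressibility manipulations, and you conclude strict monotonicity from the sign of the symmetrized Biot--Savart kernel exactly as in \eqref{ur}--\eqref{uz}. The only cosmetic difference is that for $-Z'(t)>0$ you invoke $u_r\not\equiv 0$ on $\Pi^+$ rather than the strict negativity of $u_z(0,z,t)$, which is an equally valid endpoint.
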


This monotonicity result played a crucial role in the proofs of both Choi--Jeong \cite{Choi-Jeong} and Gustafson--Miller--Tsai \cite{GMT}, where both papers expressed $P_2'(t)$ and $Z'(t)$ as double integrals of the form 
\[
\iint_{\Pi^+}\iint_{\Pi^+} K(r,\overline{r},z,\overline{z}) \,\omega\overline{\omega} \, drdz d\overline{r} d\overline{z}
\] for some  kernel $K$ that is positive in $P_2'(t)$, and negative in $Z'(t)$. The main difference of our approach from those in  is to derive new representations for the time derivative of the moments. As we will see, we will find alternative (but identical) expressions that express these integrals using the velocity field $u$, instead of $\omega$. Moreover, we rely on the vorticity total mass, whereas in the previous works the kinetic energy plays a crucial role.

\subsection{A new representation for the time derivatives of moments}\label{sec41}
Under the odd-in-$z$ assumption, the upper quadrant $\Pi^+$ remains invariant, meaning that particle trajectories never cross $z=0$. So $u_z$ vanishes at $z=0$ (i.e., on the $r$-axis). Moreover, from the axisymmetric no-swirl structure, we know that $u_r$ vanishes along the $z$-axis. With these boundary conditions, one can simply integrate by parts to find
\begin{equation*}
P_2'(t) = \iint_{\Pi^+} r^2 \partial_t \omega \, drdz = - \iint_{\Pi^+} r^2 \, \nabla \cdot (u\omega) \, drdz = 2\iint_{\Pi^+} ru_r \omega \, drdz,
\end{equation*}
where, throughout this section, $\nabla:=(\partial_r,\partial_z)$. Substituting $\omega = -\partial_z u_r + \partial_r u_z$ and re-applying integration by parts yields
\begin{align} \label{radial_1}
P_2'(t) = \int_{0}^{\infty} ru_r(r,0,t)^2 \, dr.
\end{align}
Similarly, for the time derivative of $Z(t)$ (which we will use in the next section), we have
\[
Z'(t) = \iint_{\Pi^+} z \, \partial_t \omega \, drdz = - \iint_{\Pi^+} z \, \nabla \cdot (u\omega) \, drdz = \iint_{\Pi^+} u_z  \omega \, drdz,
\] 
and
\begin{equation} \label{vertical_1}
-\!Z' (t) = \int_{0}^{\infty} \frac{u_z(0,z,t)^2}{2} \, dz + \iint_{\Pi^+} \frac{u_r(r,z,t)^2}{r} \, drdz.
\end{equation}

From the axisymmetric Biot--Savart law, expressions \eqref{u_r} and \eqref{u_z}, one can evaluate $u_r(r,0,t)$ as
\begin{align} \nonumber
u_r(r,0,t) &= \frac{1}{\pi} \iint_{\Pi} \frac{\overline{z}}{r\sqrt{r\overline{r}}} \, \left(-F'(D)\right) \, \overline{\omega} \, d\overline{r}d\overline{z}\\
&= \frac{2}{\pi} \iint_{\Pi^+} \frac{\overline{z}}{r\sqrt{r\overline{r}}} \, \left(-F'\left(\frac{(r-\overline{r})^2+\overline{z}^2}{r\overline{r}}\right)\right) \, \overline{\omega} \, d\overline{r}d\overline{z},
\label{ur}
\end{align}
 and $u_z(0,z,t)$, using the asymptotic estimates for $F$ and $F'$ at $\infty$, as
\begin{align} \nonumber
-u_z(0,z,t) &= \frac{1}{\pi} \lim_{r\to 0^{+}} \iint_{\Pi} \frac{\sqrt{\overline{r}}}{\sqrt{r}} \left[ \frac{1}{4r} F\left(D\right) + \left( \frac{r-\overline{r}}{r\overline{r}} - \frac{D}{2r} \right)F'(D) \right] \overline{\omega} \, d\overline{r}d\overline{z} \\
\nonumber
&= \frac{1}{\pi} \lim_{r\to 0^{+}} \iint_{\Pi} \left[ \frac{\pi}{8} \frac{\overline{r}^{2}}{\left(\overline{r}^2+(z-\overline{z})^2\right)^{3/2}} - \frac{3\pi}{4} \left( \frac{\overline{r}^{1/2} (r-\overline{r})}{r^{3/2} \overline{r}} - \frac{\overline{r}^2+(z-\overline{z})^2}{2r^{5/2}\overline{r}^{1/2}} \right) \frac{(r\overline{r})^{5/2}}{(\overline{r}^2+(z-\overline{z})^2)^{5/2}} \right] \overline{\omega} \, d\overline{r}d\overline{z} \\
\nonumber
&= \frac{1}{2} \iint_{\Pi} \frac{\overline{r}^2}{(\overline{r}^2+(z-\overline{z})^2)^{3/2}} \, \overline{\omega} \, d\overline{r}d\overline{z} \\
&= \frac{1}{2} \iint_{\Pi^+} \left[ \frac{\overline{r}^2}{(\overline{r}^2+(z-\overline{z})^2)^{3/2}} - \frac{\overline{r}^2}{(\overline{r}^2+(z+\overline{z})^2)^{3/2}} \right] \overline{\omega} \, d\overline{r}d\overline{z}.
\label{uz}
\end{align}
Thus, for each $t\ge 0$ we have: $u_r(r,0,t)>0$ for all $r\in \mathbb{R}^{+}$ and $-u_z(0,z,t)>0$ for all $z\in \mathbb{R}$. 
Together with the representations \eqref{radial_1} and \eqref{vertical_1}, this immediately implies that $P_2'(t) >0$ and $-Z'(t)>0$ for all $t\ge 0$. In other words, we provide an alternative proof of Lemma \ref{lem:monotonicity}, showing that $P_2(t)$ is strictly increasing and $Z(t)$ is strictly decreasing in $t\ge 0$.

\subsection{The total vorticity mass} 
\label{sec42}
Since $\|\omega/r\|_{L^p(\mathbb{R}^3)}$ is conserved for all $p\geq 1$, by rewriting the $L^1$ norm as integral on the $rz$-plane (and using the odd-in-$z$ symmetry and non-negativity in $\Pi^+$), we know the following quantity  \[ m(t) = \iint_{\Pi^+} \omega(r,z,t) \, drdz \]
is conserved for all time, which we call the \emph{total vorticity mass}, and denote it by $m_0$. Plugging $\omega = -\partial_z u_r + \partial_r u_z$ into the above integral and performing integration by parts (where both $u_r$ and $u_z$ vanish at infinity), we obtain
\[
m_0 = \int_{0}^{\infty} u_r(r,0,t) \, dr + \int_{0}^{\infty} (-u_z(0,z,t)) \, dz.
\]
Since $u_r>0$ along the $r$-axis and $-u_z>0$ along the $z$-axis, we observe that both integrals above are positive. 

In fact, it is possible to evaluate these integrals explicitly. In particular, one has
\begin{align} 
\nonumber
\int_{0}^{\infty} \left(-u_z(0,z,t)\right) \, dz &= \frac{1}{2} \iint_{\Pi^+} \overline{r}^2 \overline{\omega} \left[ \int_{0}^{\infty} \left(\frac{1}{(\overline{r}^2+(z-\overline{z})^2)^{3/2}} - \frac{1}{(\overline{r}^2+(z+\overline{z})^2)^{3/2}}  \right) \, dz \right] \, d\overline{r}d\overline{z}\\
\nonumber
&= \frac{1}{2} \iint_{\Pi^+} \overline{r}^2 \overline{\omega} \left[ \int_{-\overline{z}}^{\overline{z}} \frac{1}{(\overline{r}^2+z^2)^{3/2}} \,dz \right] \, d\overline{r}d\overline{z} \\
\label{mass1}
&= \iint_{\Pi^+} \frac{\overline{z}}{\sqrt{\overline{r}^2+\overline{z}^2}} \, \overline{\omega} \, d\overline{r}d\overline{z} = \iint_{\Pi^+} \frac{z}{\sqrt{r^2+z^2}} \, \omega \, drdz,
\end{align}
which is a portion of the total mass as $0< \frac{z}{\sqrt{r^2+z^2}} <1$ in $\Pi^{+}$. Consequently,
\begin{align} \label{mass2}
\int_{0}^{\infty} u_r(r,0,t)\, dr = \iint_{\Pi^+} \left(1- \frac{z}{\sqrt{r^2+z^2}}\right)  \omega \, drdz.
\end{align}
Hence, comparing the integrand factors in \eqref{mass1} and \eqref{mass2} reveals which axis integral contributes more to the total mass, particularly in certain regions.

Now we are ready to prove Theorem \ref{thm:main2}. 

\begin{proof}[Proof of Theorem \ref{thm:main2}]
At each $t\geq 0$, for any $0<a<b<\infty$, by Cauchy-Schwarz inequality we obtain 
\[
 \left( \int_{a}^{b} r u_r(r,0,t)^2 \, dr \right) \left(\int_{a}^{b} \frac{1}{r} \, dr \right) \ge \left(\int_{a}^{b} u_r(r,0,t)\, dr\right)^2.
\]
Noting \eqref{radial_1}, we then have
\begin{equation} \label{p2_estimate}
P_2'(t) \ge \frac{\left(\int_{a}^{b} u_r(r,0,t) \, dr \right)^2}{\log (b/a)}.
\end{equation}
To obtain a lower bound on the right hand side (possibly depending on $t$), the rest of the proof will be about choosing $a$ and $b$, depending on $t$ as well as the initial data $\omega_0$ and $u_0$, so that the integral $\int_a^b u_r(r,0,t) \, dr$ admits a uniform positive lower bound for all time, while $\log(b/a)$ remains controlled from above.

First, we claim that 
\begin{equation}\label{int_lower}\int_0^\infty u_r(r,0,t) dr > \gamma(\omega_0)>0 \quad\text{ for all }t\geq 0,
\end{equation} where $\gamma(\omega_0)$ is a positive constant depending on $m_0 := \int_{\Pi^+} \omega_0 drdz$, $A_0 := \|\omega_0/r\|_{L^\infty}$ and $Z(0)$.
To show this, first note that for any $H>0$ and any $t>0$, the mass of $\omega(\cdot,t)$ in $\{z>H\}$ can be bounded by
\[
\iint_{\Pi^+} \omega(r,z,t) 1_{\{z>H\}} \,drdz \leq  \iint_{\Pi^+} \frac{z}{H} \omega(r,z,t)  \,drdz \leq \frac{Z(t)}{H} \leq \frac{Z(0)}{H}, 
\]
therefore by fixing 
\[H:= \frac{m_0}{2Z(0)},
\] we know the mass of $\omega(\cdot,t)$ in $\{z>H\}$ never exceeds $m_0/2$ for all $t\geq 0$.

Next let us consider the straight line $z=k r$ in $\Pi^+$, where the slope $k>0$ will be fixed momentarily. This line divides the region $\{ 0 < z < H\}$ into two parts. The left part $\{ 0 < z < H, 0<r<z/k\}$ is a triangle, whose maximum $r$-coordinate is $H/k$. Using the fact that $\|\omega/r\|_{L^\infty} = \|\omega_0/r\|_{L^\infty}=A_0 $, $\omega(\cdot,t)$ is uniformly bounded in this triangle for all times by 
\[
\omega(\cdot,t) \leq \left\|\frac{\omega}{r}\right\|_{L^\infty} \frac{H}{k} \leq \frac{A_0 H}{k} \quad \text{ for } 0<z<H, \,0<r<\frac{z}{k}.
\] Since the area of this triangle is $H^2/(2k)$, the mass of $\omega(\cdot,t)$ in this triangle can be bounded by
\[
\int_0^H \int_0^{z/k} \omega(r,z,t) \,drdz \leq \frac{A_0 H}{k} \frac{H^2}{2k}  = \frac{A_0 m_0^3}{16 k^2 Z(0)^3 },
\]
where we use the definition of $H$ in the last equality. Setting 
$k:= \frac{A_0^{1/2} m_0}{2 Z(0)^{3/2}},
$ the right hand side above becomes $m_0/4$. 

Combining the above mass upper bounds in $\{z>H\}$ and the triangle, we know that with the above definition of $H$ and $k$, the right part $\{ 0 < z < H, r>z/k\}$ contains at least mass $m_0/4$ for all times. Applying this to \eqref{mass2}, we obtain
\begin{align*}
\int_{0}^{\infty} u_r(r,0,t)\, dr &\ge  \int_0^H \int_{z/k}^\infty  \left(1- \frac{z}{\sqrt{r^2+z^2}}\right)\omega(r,z,t)  \, drdz\\
& > \left( 1-\frac{k}{\sqrt{1+k^2}}\right) \frac{m_0}{4} =: \gamma,
\end{align*}
finishing the proof of the claim \eqref{int_lower}.

On the other hand,  we will control $\int_0^a u_r(r,0,t) dr$ and $\int_b^\infty u_r(r,0,t) dr$ from above using the upper bounds for $\|u_r(t)\|_{L^\infty}$ and $P_2(t)$ established in Theorem~\ref{thm:main1}. From \eqref{upper_bd_u}, it follows that
\begin{equation}\label{int_a}
\int_{0}^{a} u_r(r,0,t) \, dr \le a\|u_r(\cdot,t)\|_{L^\infty(\mathbb{R}^3)} \le  a \, C_5 (1+t)^{1/3}, 
\end{equation}
where $C_5$ depends only on $\omega_0$ and $u_0$. Therefore, by choosing $a=a(t;\omega_0,u_0)$ as
\begin{equation}\label{def_a}
a=a(t;\omega_0,u_0):= \frac{\gamma}{4C_5(1+t)^{1/3}},
\end{equation}
we achieve the bound $\int_{0}^{a} u_r(r,0,t) \, dr \le \frac{\gamma}{4}$ for all $t\geq 0$.

To control $\int_b^\infty u_r(r,0,t) dr$, note that although $P_2(t) = \iint_{\Pi^+} \omega r^2 drdz$ is defined as a double integral in $\Pi^+$, it is identical to a line integral along the $r$-axis: plugging in $\omega = -\partial_z u_r + \partial_r u_z$ and performing integration by parts gives
\begin{equation}\label{p2_alt}
P_2(t) = \iint_{\Pi^+} (-\partial_z u_r + \partial_r u_z) r^2 \,drdz = \int_0^\infty r^2 u_r(r,0,t) dr,
\end{equation}
where we used that 
\[\iint_{\Pi^+} (\partial_r u_z) r^2 \,drdz = -\iint_{\Pi^+} 2r u_z \,drdz = \iint_{\Pi^+} 2 \partial_r\psi \,drdz = 0. 
\]
Combining \eqref{p2_alt} with the upper bound \eqref{upper_bd_pk} with $k=2$ yields 
\begin{equation} \label{int_b} 
\int_{b}^{\infty} u_r(r,0,t) dr \le \int_{b}^{\infty} \frac{r^2}{b^2} u_r(r,0,t) dr =  \frac{P_2(t)}{b^2} \le \frac{C_6(1+t)^2}{b^2},
\end{equation}
where $C_6$ depends only on $\omega_0$ and $u_0$. Choosing $b=b(t;\omega_0,u_0)$ as
\begin{align} \label{eq:a-b}
\begin{split}
b=b(t;\omega_0,u_0):=\frac{2C_6^{1/2} (1+t)}{\gamma^{1/2} },
\end{split}
\end{align}
we guarantee that $\int_{b}^{\infty} u_r(r,0,t) \leq \frac{\gamma}{4}$ for all $t\geq 0$. 

Therefore, combining \eqref{int_lower} with \eqref{int_a} and \eqref{int_b} gives
\begin{equation} \label{eq:5}
\int_{a(t;\omega_0,u_0)}^{b(t;\omega_0,u_0)} u_r(r,0,t) \, dr \geq \gamma - \int_{0}^{a(t;\omega_0,u_0)} u_r(r,0,t) \, dr - \int_{b(t;\omega_0,u_0)}^{\infty} u_r(r,0,t) \, dr \ge \frac{\gamma}{2}.
\end{equation}
Also, the  definitions of $a(t;\omega_0,u_0), b(t;\omega_0,u_0)$ in \eqref{def_a} and \eqref{eq:a-b} yields
\begin{equation}\label{temp_ab}
\log \frac{b(t;\omega_0,u_0)}{a(t;\omega_0,u_0)} \leq C(\omega_0,u_0)(1+\log(1+t)) \leq  C(\omega_0,u_0)\log(2+t)
\end{equation}
for some $C(\omega_0,u_0)>0$.

Finally, plugging \eqref{eq:5} and \eqref{temp_ab} into \eqref{p2_estimate}, we obtain
\begin{equation*}
P_2'(t) \ge \frac{c(\omega_0,u_0)}{\log (2+t)} \quad  \text{for all} \, \, t\ge 0,
\end{equation*} 
where $c(\omega_0,u_0)>0$ is a constant depending only on $\omega_0$ and $u_0$.
 Integrating it over $[0,t]$ gives
\begin{equation*}
P_2(t) \ge P_2(0) + \frac{c(\omega_0,u_0) \, t}{\log (2+t)},
\end{equation*}
which yields \eqref{sublinear_growth}.
\end{proof}

\section{Growth of vorticity in $L^p(\mathbb{R}^3)$}
\label{sec5}

We continue with the same initial data $\omega_0$ as in the previous section, which is odd-in-$z$ and nonnegative in the upper quadrant $\Pi^+$. Thoughout this section, the vertical moment \eqref{eq:radial_vertical} plays a crucial role in our analysis, and we impose the additional assumption $Z(0)<\infty$.

Furthermore, the quantity $\omega/r$ is conserved along particle trajectories, and therefore all of its $L^p$ norms are preserved in time. Since $\Pi^{+}$ is invariant by the flow, all the conserved quantities can be restricted to $\Pi^{+}$. In particular, as in the previous section, we denote $A_0:=\|\omega_0/r\|_{L^{\infty}} = \|\omega/r\|_{L^{\infty}}$ and $m_0 := \iint_{\Pi^{+}} \omega_0 \, drdz = \iint_{\Pi^{+}} \omega \, drdz$.

Under this setup, we aim to prove Theorem \ref{thm:main3}. The proof is based on a simple observation: since $Z(t)$ is monotone decreasing in time and positive for all times, the time integral of $-Z'(t)$ in $(0,\infty)$ must be finite (and bounded by $Z(0)$). 
Recall that in \eqref{vertical_1}, we have
\[
-Z' (t) = \int_{0}^{\infty} \frac{u_z(0,z,t)^2}{2} \, dz + \iint_{\Pi^+} \frac{u_r(r,z,t)^2}{r} \, drdz.
\]
The right hand side contains two non-negative integrals. In the following key proposition, we take a closer look at the right hand side and aim to bound it from below. Roughly speaking, it says that if $\omega$ has a positive mass $m$ in the region $\{r<R\}$ at some time, $-Z'(t)$ cannot be too small at this time; it is at least $\sim m R^{-4}$.

\begin{proposition} \label{prop:new}
Take any positive $m\leq m_0$. Then there exists two constants $R_0(m_0, A_0, E_0)\ge 1$ and $c_0(m_0, A_0, E_0, Z(0))>0$, such that the following is true: for any $t\geq 0$ and $R\geq R_0$, if $\omega(\cdot,t)$ satisfies
\begin{equation} \label{eq:new_mass_cond}
\iint_{r\le R} \omega (r,z,t) \, drdz \ge m,    
\end{equation}
then we have
\begin{equation} \label{eq:new}
-Z' (t) =  \int_0^{\infty} \frac{u_z (0,z,t)^2}{2} \, dz + \iint_{\Pi^{+}} \frac{u_r(r,z,t)^2}{r} \, drdz \ge \frac{c_0 m^4}{R^{4}}. 
\end{equation}
\end{proposition}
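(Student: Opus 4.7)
My plan is to exploit the fact that the right-hand side of \eqref{eq:new} is a sum of two non-negative terms, and to extract a lower bound by localizing the vorticity into a rectangle, converting the mass condition \eqref{eq:new_mass_cond} into positive boundary integrals of the velocity via Stokes' theorem, and then passing from these boundary integrals to the $L^2$-quantities in \eqref{eq:new} through Cauchy--Schwarz. First, since $Z(t)\leq Z(0)$, Markov's inequality in $z$ gives $\iint_{\{z>H\}}\omega\,drdz\leq Z(0)/H$; choosing $H:=2Z(0)/m$ and combining with \eqref{eq:new_mass_cond} concentrates at least $m/2$ of the mass in the rectangle $\mathcal{B}:=[0,R]\times[0,H]$. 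Using $\omega=\partial_r u_z-\partial_z u_r$ and integrating by parts on $\mathcal{B}$ then gives
\begin{equation*}
\iint_{\mathcal{B}}\omega\,drdz=\int_0^R u_r(r,0,t)\,dr+\int_0^H(-u_z(0,z,t))\,dz-\int_0^R u_r(r,H,t)\,dr+\int_0^H u_z(R,z,t)\,dz,
\end{equation*}
in which the first two ``axis'' integrals are non-negative by \eqref{ur}--\eqref{uz}, while the last two ``outgoing fluxes'' through the top and right sides of $\mathcal{B}$ are a priori signed.

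The delicate quantitative step is to show that, for $R$ above a threshold $R_0(m_0,A_0,E_0)$, the two outgoing flux terms are each at most $m/8$ in absolute value, so that the axis integrals together control at least $m/4$. This step would exploit the decay of the Biot--Savart kernels in \eqref{u_r}--\eqref{u_z} when the evaluation radius is large compared to the bulk scale of $\omega$, and then bound the bulk scale using the conserved $m_0,A_0,E_0$ (for example, $\omega\leq A_0 r$ converts mass bounds into radius bounds, and the kinetic energy $E_0$ controls overall spread). Once established, it yields
\[
\int_0^R u_r(r,0,t)\,dr+\int_0^H(-u_z(0,z,t))\,dz\;\geq\;\frac{m}{4}.
\]

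To finish, Cauchy--Schwarz converts the axis integrals into the squared quantities in \eqref{eq:new}: for the vertical piece,
\[
\int_0^\infty u_z(0,z,t)^2\,dz\;\geq\;\frac{1}{H}\Bigl(\int_0^H(-u_z(0,z,t))\,dz\Bigr)^2,
\]
and for the planar piece,
\[
\iint_{\mathcal{B}}\frac{u_r^2}{r}\,drdz\cdot\iint_{\mathcal{B}} r\,drdz\;\geq\;\Bigl(\iint_{\mathcal{B}} u_r\,drdz\Bigr)^2,
\]
where the double integral $\iint_{\mathcal{B}} u_r\,drdz$ will be related to axis data via the stream-function identity $\int_0^H u_r(r,z,t)\,dz=\psi(r,H)/r$ (which follows from $r u_r=\partial_z\psi$ and $\psi(r,0)=0$ by odd-in-$z$ symmetry), with $\psi(r,H)$ controlled below from the Biot--Savart representation of $\psi$. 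Substituting $H=2Z(0)/m$ and $\iint_{\mathcal{B}} r\,drdz=R^2H/2$ into these Cauchy--Schwarz denominators ultimately produces the claimed $c_0 m^4/R^4$ scaling. The main obstacle will be the outgoing-flux control: one must squeeze both flux integrals below $m/8$ uniformly in $t\ge 0$ and in $m\le m_0$ using only the conserved quantities, and this is what will pin down the explicit values of $R_0$ and $c_0$.
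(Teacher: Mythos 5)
Your overall framework — represent $-Z'(t)$ as a sum of non-negative velocity integrals, localize the mass in $z$ via Markov's inequality using $Z(t)\le Z(0)$, apply Green's theorem on a box to convert mass into boundary velocity integrals, and pass to squared quantities by Cauchy--Schwarz — is the same as the paper's, and the cutoff $H=O(Z(0)/m)$ together with the Cauchy--Schwarz bound $\int_0^\infty u_z(0,z)^2\,dz\ge H^{-1}\big(\int_0^H(-u_z(0,z))\,dz\big)^2$ is essentially the paper's Case~1. However, the two ``delicate'' steps you leave open are genuine gaps, and they both fail in precisely the case that matters (vorticity mass in $\{r\le R\}$ concentrated near $z=0$, where by \eqref{mass1} the $u_z$-axis integral is small and the $u_r$-piece must carry the bound). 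First, the outgoing-flux control: there is no a priori localization of $\omega(\cdot,t)$ (Theorem~1.4 shows vorticity escapes to $r\to\infty$), so ``decay of the Biot--Savart kernel compared to the bulk scale of $\omega$'' is not available; and even a naive energy--Cauchy--Schwarz estimate of $\int_0^H|u_z(R,z)|\,dz$ over a box of height $H\sim Z(0)/m$ would force $R_0$ to blow up as $m\to 0$, contradicting the requirement $R_0=R_0(m_0,A_0,E_0)$. Second, and more seriously, the planar-piece argument via $\iint_{\mathcal B}u_r\,drdz=\int_0^R\psi(r,H)/r\,dr$ cannot give a lower bound $\gtrsim m$: when the mass generating $\psi$ sits near $z=0$ and $H$ is large compared to its $z$-extent, a vortex ring at $(\bar r,\bar z)$ contributes only $\psi(r,H)=O(m\,r^2\bar r^2/H^3)$, so $\int_0^R\psi(r,H)/r\,dr$ is not comparable to $m$, and in any event this quantity is not comparable to the axis integral $\int_0^R u_r(r,0)\,dr$ that you actually control.

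The paper sidesteps both obstacles by changing the geometry. It splits $\{r\le R\}$ by the line $r=\sqrt3\,z$; the ``tall'' sub-case is handled purely through the $u_z$-axis integral as you do, while the ``flat'' sub-case is handled by (i) extracting $\int_0^R u_r(r,0)\,dr\ge c_1 m$ directly from the Biot--Savart kernel (Lemma~\ref{lem:new1}), and then (ii) propagating this one-dimensional bound not to a tall box but to a \emph{thin} strip of height $\sim m^2/R^2$, chosen so that (via $\omega\le A_0 r$) the vorticity mass inside the strip is at most $c_1 m/2$, with the right-boundary flux controlled by selecting a good $\tilde R\in[R,2R]$ using the kinetic energy. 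This thin-strip propagation is the missing mechanism in your proposal: it is what upgrades the axis bound on $u_r(r,0)$ to the two-dimensional lower bound on $\iint_{\Pi^+}u_r^2/r\,drdz$ required by \eqref{eq:new}.
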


\begin{proof}[Proof of Proposition \ref{prop:new}]
Assume that the mass condition \eqref{eq:new_mass_cond} holds for some $t$. Fixing this $t$, we split the region $\{r\le R\}$ into $D_1(R)\cup D_2(R)$, where 
\[
D_1(R):= \{(r,z)\in \Pi^{+} \, : \, r< \sqrt{3}z \}\cap \{r\le R\} \quad \text{and} \quad D_2(R):=\{(r,z) \in \Pi^{+} \, :\, r\ge \sqrt{3}z\}\cap \{r\le R\}.
\]
Then we consider two cases. 

\noindent \textit{Case 1.} Suppose that the vorticity mass in $D_1(R)$ exceeds $m/2$, i.e., $\iint_{D_1(R)} \omega \, drdz >m/2$. Then using \eqref{mass1}, we have
\begin{equation} \label{case1_1}
\int_0^{\infty} (-u_z(0,z,t)) \, dz = \iint_{\Pi^+} \frac{z}{\sqrt{r^2+z^2}} \, \omega \, dr dz > \frac{1}{2} \iint_{D_1(R)} \omega \, drdz > \frac{m}{4}.
\end{equation}
On the other hand, note that although $Z(t) = \iint_{\Pi^+} z\omega drdz$ is defined as a double integral in $\Pi^+$, it is identical to a line integral along the $z$-axis: plugging in $\omega=-\partial_z u_r + \partial_r u_z$ and performing integration by parts yields
\[
Z(t) = \iint_{\Pi^+} (-\partial_z u_r + \partial_r u_z) z \, drdz = \int_0^\infty z (-u_z(0,z,t)) \,dz,
\]
where we used that $\iint_{\Pi^+} u_r\, drdz = \iint_{\Pi^+} r^{-1}\partial_z \psi\, drdz = 0$.
From this expression for $Z(t)$ and using its decreasing property, we obtain
\[
\int_{0}^{\infty} z(-u_z(0,z,t)) \, dz = Z(t) \le Z(0).
\]
Therefore, for any $\alpha>0$ we have 
\[
 \int_{\alpha Z(0)}^{\infty} \left(-u_z(0,z,t)\right) \, dz \le \frac{1}{\alpha Z(0)}  \int_{\alpha Z(0)}^{\infty} z \left(-u_z(0,z,t)\right) \, dz \le \frac{1}{\alpha}.
\]
Choosing $\alpha =\frac{8}{m}$ in the above inequality and combining it with \eqref{case1_1} gives
\[
\int_0^{8Z(0)/m}  (-u_z(0,z,t)) \, dz > \frac{m}{4}-\int_{8Z(0)/m}^{\infty}  (-u_z(0,z,t)) \, dz > \frac{m}{8}.
\]
By Cauchy--Schwarz inequality, we now obtain
\[
\int_0^{\infty} u_z(0,z,t)^2 \, dz \ge \int_0^{8Z(0)/m} u_z^2 \, dz \ge \frac{m}{8Z(0)} \left( \int_0^{8Z(0)/m} (-u_z(0,z,t)) \, dz \right)^2 > \frac{m^3}{8^3 Z(0)}.
\]
Since $m^3 \ge m^4/m_0$ (recall that $m\le m_0$), we obtain \eqref{eq:new} under Case 1 by choosing $c_0\leq \frac{1}{2\cdot 8^3 m_0 Z(0)}$ (also note that we have used $R\geq R_0\geq 1$; $R_0$ will be fixed in Case 2).

\medskip 

\noindent \textit{Case 2.} Suppose that the vorticity mass in $D_2(R)$ is at least $m/2$, i.e., $\iint_{D_2(R)} \omega \, drdz \ge m/2$.  In this case, our goal is to show that 
\begin{equation}\label{temp0}
\iint_{\Pi^{+}} \frac{u_r(r,z,t)^2}{r} \, drdz \geq c_0(m_0, A_0, E_0) m^4 R^{-4}.
\end{equation}
In order to do this, we first establish a technical lemma that gives a lower bound of the velocity integral along a portion of the $r$-axis under Case 2.

\begin{lemma} \label{lem:new1}
If at some time $t\geq 0$,  the assumption $\iint_{D_2(R)} \omega(r,z,t) \, drdz \ge \frac{m}{2}$ holds, then we have
\begin{equation} \label{claim_eq}
\int_0^{R} u_r(r,0,t) \, dr \ge c_1 m
\end{equation}
at this time, where $c_1 >0$ is a universal constant.
\end{lemma} 

\begin{proof}[Proof of Lemma \ref{lem:new1}]
Recall that for a point on the $r$-axis, using \eqref{ur}, the velocity $u_r$ is given by \[ u_r(r,0,t) = \frac{2}{\pi} \iint_{\Pi^+} \frac{\overline{z}}{r\sqrt{r\overline{r}}} \, \left(-F'\left(\frac{(r-\overline{r})^2+\overline{z}^2}{r\overline{r}}\right)\right) \overline{\omega} \, d\overline{r}d\overline{z}. \]
Integrating this for $r\in[0,R]$ and swapping the order of integration, we have
\begin{equation} \label{claim_eq1}
\int_0^{R} u_r(r,0,t) \, dr \ge \frac{2}{\pi} \iint_{D_2(R)} \left[ \int_0^{R} \frac{\overline{z}}{r\sqrt{r\overline{r}}} \, \left(-F'\left(\frac{(r-\overline{r})^2+\overline{z}^2}{r\overline{r}}\right)\right) dr \right] \overline{\omega} \, d\overline{r}d\overline{z}. 
\end{equation}

For each $(\overline{r},\overline{z}) \in D_2(R)$, we have $\overline{r} \ge \sqrt{3}\, \overline{z}$. This implies that the interval $(\overline{r}-\overline{z}, \overline{r})$ is contained in $(\overline{r}/3,\overline{r})$, and therefore also in $(0,R)$. Moreover, for every $r\in (\overline{r}-\overline{z}, \overline{r})$, we have \[ \frac{(r-\overline{r})^2+\overline{z}^2}{r\overline{r}} \le \frac{2\,\overline{z}^2}{r\overline{r}} \le \frac{6 \, \overline{z}^2}{\overline{r}^2} \le 2 \quad\text{ for all }(\overline{r},\overline{z})\in D_2(R). \]
In view of the comparison $ -F'(s) \sim \frac{1}{s}$ for $0<s<2$, this yields 
\[
-F'\left(\frac{(r-\overline{r})^2+\overline{z}^2}{r\overline{r}}\right) \ge \frac{c \, \overline{r}^2}{ \overline{z}^2} \quad\text{ for all }(\overline{r},\overline{z})\in D_2(R) \text{ and } r\in (\overline{r}-\overline{z},\overline{r}), 
\]
where $c >0$ is some universal constant.
Thus, we can bound the inner integral on the right hand side of \eqref{claim_eq1} from below as
\[
\int_0^{R} \frac{\overline{z}}{r\sqrt{r\overline{r}}} \, \left(-F'\left(\frac{(r-\overline{r})^2+\overline{z}^2}{r\overline{r}}\right)\right) dr \ge c \int_{\overline{r}-\overline{z}}^{\overline{r}} \frac{\overline{z}}{r\sqrt{r\overline{r}}} \, \frac{\overline{r}^2}{\overline{z}^2} \, dr \ge c \int_{\overline{r}-\overline{z}}^{\overline{r}}  \frac{1}{\overline{z}} \, dr \ge c.
\] 
Combining this with \eqref{claim_eq1}, we conclude that 
\[
\int_0^{R} u_r(r,0,t) \, dr \ge \frac{2c}{\pi} \iint_{D_2(R)} \overline{\omega} \, d\overline{r}d\overline{z} \ge \frac{cm}{\pi},
\] 
and therefore \eqref{claim_eq} follows.
\end{proof}
Note that \eqref{claim_eq} only contains information on the integral of $u_r$ on the $r$-axis, and does not say anything about the interior of $\Pi^+$. In order to obtain \eqref{temp0}, we aim to show a positive lower bound for
$
\int_0^{2R} |u_r(r,\tilde z,t)| \, dr$ for all $\tilde z \in [0,c_2 m^2 R^{-2}]$, where $c_2:= \frac{c_1}{4A_0 m_0}$, $c_1$ is the universal constant in \eqref{claim_eq}, and $A_0:= \|\omega_0/r\|_{L^\infty}$. To do this, note that in the rectangular region 
\[
Q := \{(r,z)\in \Pi^+:  0<r<2 R, \, \, 0<z < c_2 m^2 R^{-2} \}, 
\]
the vorticity mass is bounded above by
\[
\iint_{Q} \omega \, dr dz\le \left\| \frac{\omega}{r} \right\|_{L^{\infty}} \int_{0}^{c_2 m^2 R^{-2}}\int_{0}^{2R}  r \, drdz = 2A_0 c_2 m^2 \leq \frac{c_1 m}{2},
\]
where we used $\|\omega(t)/r\|_{L^\infty}=\|\omega_0/r\|_{L^\infty}=A_0$ in the second step, and the definition $c_2:= \frac{c_1}{4A_0 m_0}$ in the last step. 

As a result, for any $\tilde R \in [R,2R]$ and $\tilde z \in [0, c_2 m^2 R^{-2}]$, the vorticity mass in the rectangle
\[
Q_{\tilde R,\tilde z} := \{(r,z)\in \Pi^+: 0<r<\tilde R, \, \, 0<z < \tilde z \}
\]
does not exceed $c_1 m/2$ since $Q_{\tilde R,\tilde z} \subset Q$. Integrating $\omega$ over $Q_{\tilde R,\tilde z}$ and then applying Green's theorem, we obtain
\begin{equation} \label{case2_1}
\begin{split}
\frac{c_1 m}{2} &\geq \int_{Q_{\tilde R,\tilde z}}\omega\,drdz = \int_{Q_{\tilde R,\tilde z}} (-\partial_z u_r + \partial_r u_z) \,drdz \\
&= \int_0^{\tilde R} u_r(0,r,t)\, dr + \int_0^{\tilde z} u_z(\tilde R, z, t) \, dz - \int_0^{\tilde R} u_r(r,\tilde z,t)\, dr - \int_0^{\tilde z} u_z(\tilde z,0,t)\, dr\\
&= I_1 + I_2 - I_3 - I_4.
\end{split}
\end{equation}
Note that we have $I_1\geq c_1 m$, which follows from \eqref{claim_eq} together with the facts that $\tilde R>R$ and $u_r\geq 0$ on the $r$-axis. Also, we have $I_4<0$ since $u_z<0$ on the $z$-axis due to \eqref{uz}. 

To control $I_2$, we claim the following: there exists some $\tilde R\in [R,2R]$ such that 
\begin{equation} \label{case2_2} 
\int_0^{c_2 m^2 R^{-2}} |u_z(\tilde R, z, t)| \, dz \leq \frac{c_1 m}{4}.
\end{equation}
To show this, assume for contradiction that \eqref{case2_2} is false for all $\tilde R \in [R,2R]$, i.e. 
\[ \int_0^{c_2 m^2 R^{-2}} |u_z(r,z,t)| \, dz > \frac{c_1 m}{4} \quad \text{ for all }r\in [R,2R].\] 
Then, noting $E_0\ge 2\iint_{\Pi^+} ru_z^2 \, drdz$ and applying Cauchy--Schwarz inequality, we get 
\begin{align*}
E_0 &\ge 2\int_{R}^{2R} \int_0^{c_2 m^2 R^{-2}} r|u_z(r,z,t)|^2 \, dz dr \\
&\ge \frac{2R^2}{c_2 m^2} \int_{R}^{2R} r \left(\int_0^{c_2 m^2 R^{-2}} |u_z(r,z,t)| \, dz \right)^2 dr > \frac{2R^2}{c_2 m^2} \cdot 3R^2\Big(\frac{c_1 m}{4}\Big)^2 \geq \frac{3 c_1^2} {8c_2} R_0^4,
\end{align*}
where in the last step we used the assumption $R\geq R_0$. We can then choose $R_0 := R_0(m_0, A_0, E_0)\geq 1$ such that the right hand side above exceeds $2E_0$ (recall that $c_2$ only depends on $m_0$ and $A_0$, and $c_1$ is the universal constant in \eqref{claim_eq}), arriving at a contradiction. This finishes the proof of \eqref{case2_2}.

Note that \eqref{case2_2}  implies that for all $\tilde z \in [0,c_2 m^2 R^{-2}]$, there exists some $\tilde R \in [R,2R]$ such that $|I_2| \leq \frac{c_1 m}{4}$, therefore for such $\tilde R$ we have
\[
\int_0^{\tilde R} u_r(r,\tilde z,t)\, dr = I_3 \geq I_1 - |I_2| - I_4 - \frac{c_1 m}{2} \geq \frac{c_1 m}{4}.
\]
Since $\tilde R \leq 2R$, this implies
\[
\int_0^{2R} |u_r(r,\tilde z,t)|\, dr \geq I_3 \geq \frac{c_1 m}{4} \quad \text{ for all }\tilde z \in [0,c_2 m^2 R^{-2}].
\]
Consequently, applying Cauchy--Schwarz inequality yields
\begin{align*}
\iint_{\Pi^+} \frac{u_r(r,z,t)^2}{r} \, drdz &\ge \int_{0}^{c_2 m^2 R^{-2}}  \left(\int_{0}^{2R} \frac{u_r(r,z,t)^2}{r} \, dr \right) dz \\
&\ge \frac{1}{2R^2} \int_{0}^{c_2 m^2 R^{-2}}  \left(\int_{0}^{2R} |u_r(r,z,t)| \, dr \right)^2 dz \\
&\geq \frac{1}{2R^2} \, c_2 m^2 R^{-2} \left(\frac{c_1 m}{4}\right)^2 = \frac{c_2 c_1^2}{32}m^4 R^{-4}.    
\end{align*}
We therefore obtain \eqref{temp0} and consequently \eqref{eq:new}, completing the proof for Case 2. 
\end{proof}

We now prove Theorem \ref{thm:main3}.

\begin{proof}[Proof of Theorem \ref{thm:main3}]
Recall that the quantity $\omega/r$ is transported by the flow $u$, which is divergence-free in $\mathbb{R}^3$. Therefore, the distribution function of $\omega/r$ in $\Pi^+$ remains the same for all time with respect to the measure $d\mu := r\, drdz$.
As before, we also recall notations $A_0:=\|\omega_0/r\|_{L^{\infty}} >0$ and $m_0:=\iint_{\Pi^+} \omega_0 \, drdz>0$. 

For each $0<\lambda\le A_0$, defining the level sets $D_{\lambda}(t):=\{(r,z) \in \Pi^+ \, : \, \omega (r,z,t)/r \ge \lambda\}$, we observe that
\[
m_0 = \int_{\Pi^{+}} \omega (r,z, t) \, drdz = \int_{\Pi^{+}} \frac{\omega(r,z, t)}{r} \, rdrdz = \int_0^{A_0} \mu (D_{\lambda}(t)) \, d\lambda>0,
\]
where $\mu (A):=\int_{A} \, rdrdz$. Thus, there exists $0<\delta \le A_0$ that only depends on the distribution of $\omega_0/r$, such that 
\begin{equation}\label{ineq_D}
 \mu (D_{\delta} (t)) = \mu (D_{\delta} (0)) \ge \frac{m_0}{2A_0} \quad\text{ for all } t\geq 0.
 \end{equation}

From now on, we fix 
\begin{equation}\label{def_m}
m:=\min\Big\{m_0, \frac{m_0 \delta}{4A_0}\Big\} \in (0,m_0],
\end{equation} and let $R_0\ge 1$ and $c_0>0$ be the constants in Proposition~\ref{prop:new} corresponding to this $m$ (thus, the constants $m, R_0, c_0$ only depend on the initial data $\omega_0$ and $u_0$). Applying Proposition~\ref{prop:new} with this $m$, we see that for any taken $R\ge R_0$, if \eqref{eq:new_mass_cond} is satisfied at some $t\geq 0$, then we have \eqref{eq:new} at this time $t$. Note that this implies \eqref{eq:new_mass_cond} cannot hold for too long time. Indeed, if we define $C_7:=c_0^{-1}  m^{-4} Z(0)$, which only depends on the initial data, and suppose that \eqref{eq:new_mass_cond} holds for all $t\in [0, C_7 R^4]$, then we would have
\[
-Z'(t) \geq c_0 m^4 R^{-4} \quad\text{ for all } t\in [0, C_7 R^4].
\]
Integrating this inequality in time yields
\[
Z(0) - Z(C_7 R^4) \geq \int_{0}^{C_7R^4} c_0 m^4 R^{-4} \, dt = c_0 m^4 C_7 = Z(0),
\]
i.e., $Z(C_7 R^4) \leq 0$, contradicting with the fact that $Z(t) >0$ for all $t\geq 0$. As a result, we have shown that for any $R\ge R_0$, there must be some $t_R\in [0, C_7 R^4]$ such that \eqref{eq:new_mass_cond} fails. Hence at this $t_R$ we have
\begin{equation}\label{ineq_m}
\iint_{r\leq R}\omega(r,z,t_R) \, drdz < m.
\end{equation}

On the other hand, the left hand side above can be bounded from below as
\[
\iint_{r\leq R}\omega(r,z,t_R) \, drdz= \iint_{r\leq R}\frac{\omega(r,z,t_R)}{r} \, d\mu \geq \iint_{D_\delta(t_R) \cap \{r\leq R\}}\frac{\omega(r,z,t_R)}{r} \, d\mu \geq \delta \mu(D_\delta(t_R) \cap \{r\leq R\}).
\]
Combining this with \eqref{ineq_m} and the definition of $m$ in \eqref{def_m} implies
\[
\mu(D_\delta(t_R) \cap \{r\leq R\}) < \frac{m}{\delta} \leq \frac{m_0}{4A_0}.
\]
Subtracting the above inequality from \eqref{ineq_D}, we obtain
\begin{equation}\label{ineq_tr}
\mu(D_\delta(t_R) \cap \{r> R\}) > \frac{m_0}{4A_0} \quad \text{ for some }t_R \in [0, C_7 R^4].
\end{equation}
It remains to show that this implies the growth of $\|\omega\|_{L^p}$ (in the limsup sense) for all $1\leq p\leq \infty$.

For $p=\infty$, note that \eqref{ineq_tr} directly implies 
\[
\|\omega(t_R)\|_{L^\infty(\mathbb{R}^3)} = \|\omega(t_R)\|_{L^\infty(\Pi^+)} \geq \delta R,
\]
by the definition of $D_\delta$ and the fact that $D_\delta(t_R) \cap \{r>R\} \neq \emptyset$. 

For $1\leq p<\infty$, we have
\begin{align*}
		\|\omega(t_R)\|_{L^{p}(\mathbb{R}^3)}^p &= 4\pi \iint_{\Pi^+} |\omega (r,z,t_R)|^p \, d\mu \ge 4\pi R^p \iint_{D_{\delta} (t_R) \cap \{ r > R\}} \left|\frac{\omega(r,z,t_R)}{r}\right|^p \, d\mu \\
		&\ge 4\pi \left(\delta R\right)^p \mu(D_{\delta} (t_R) \cap \{ r > R\}) > \frac{\pi m_0}{A_0} \left(\delta R\right)^p,
		\end{align*}
and taking the power $1/p$,
\[
\|\omega(t_R)\|_{L^{p}(\mathbb{R}^3)} \geq (\pi m_0 A_0^{-1})^{1/p} \delta R.
\]
Since $\delta>0$ depends only on the initial data and $(\pi m_0 A_0^{-1})^{1/p}$ is uniformly bounded from below for all $p\geq 1$, we conclude that, at time $t_R$,
\[
\|\omega(t_R)\|_{L^p(\mathbb{R}^3)} \geq \tilde c (\omega_0,u_0) R \quad\text{ for all } 1\leq p\leq \infty,
\]
with some constant $\tilde c (\omega_0,u_0)>0$ that depends only on the initial data. 

As a consequence, for all $R\geq R_0$ we have
\[
\sup_{t\in [0,C_7 R^4]} \frac{\|\omega(t)\|_{L^p(\mathbb{R}^3)}}{R} \geq \tilde c (\omega_0,u_0).
\]
Through a change of variable $T:=C_7 R^4$, we obtain that for all $T\geq C_7 R_0^4$,
\[
\sup_{t\in [0,T]} \frac{\|\omega(t)\|_{L^p(\mathbb{R}^3)}}{t^{1/4}} \geq \sup_{t\in [0,T]} \frac{\|\omega(t)\|_{L^p(\mathbb{R}^3)}}{T^{1/4}} \geq \tilde c (\omega_0,u_0) C_7^{-1/4} =: c (\omega_0,u_0)>0,
\]
which yields \eqref{limsup_growth}.
\end{proof} 

Finally, we prove Theorem~\ref{thm:main4}, whose proof is almost a direct consequence of Proposition~\ref{prop:new}.
\begin{proof}[Proof of Theorem \ref{thm:main4}]
Fix any $R\geq R_0$. For each $t\ge 0$, recall that $m_R(t) := \iint_{\Pi_{+} \cap \{r\le R\}} \omega(r,z,t) \,drdz$ satisfies $m_R(t)\leq m_0$ by definition.

Therefore, for each $t\geq 0$, by defining $m$ as $m_R(t)$ and applying Proposition~\ref{prop:new} with this $m$, we obtain
\[
-Z'(t) \geq c_0(\omega_0,u_0) m_R(t)^4 R^{-4} \quad\text{ for all }t\geq 0.
\]
Integrating in time over $t\in[0,\infty)$ directly leads to
\[
\int_0^\infty m_R(t)^4 dt \leq c_0(\omega_0,u_0)^{-1} Z(0) R^4 < \infty, 
\]
which finishes the proof.
\end{proof}


\end{document}